\theoremstyle{plain}
    \newtheorem{thm}{Theorem}[section]
       \newtheorem{pro}{Proposition}[section]
       \newtheorem{lem}{Lemma}[section]
       \newtheorem{cor}{Corollary}[section]
       \newtheorem{defn}{Definition}[section]
       \newtheorem{rem}{Remark}[section]
\numberwithin{equation}{section}
\begin{document}
\title{Topological dimensions of random attractors for  stochastic partial differential equations with delay}

\author{Wenjie Hu$^{1,2}$,  Tom\'{a}s
Caraballo$^{3}$\footnote{Corresponding author.  E-mail address: caraball@us.es (Tom\'{a}s
Caraballo).}
\\
\small  1. The MOE-LCSM, School of Mathematics and Statistics,  Hunan Normal University,\\
\small Changsha, Hunan 410081, China\\
\small  2. Journal House, Hunan Normal University, Changsha, Hunan 410081, China\\
\small 3 Dpto. Ecuaciones Diferenciales y An\'{a}lisis Num\'{e}rico, Facultad de Matem\'{a}ticas,\\
\small  Universidad de Sevilla, c/ Tarfia s/n, 41012-Sevilla, Spain
}

\date {}
\maketitle

\begin{abstract}
The aim of this paper is to obtain an estimation of  Hausdorff as well as fractal dimensions of random attractors for a class of stochastic partial differential equations with delay. The stochastic equation is first transformed into a delayed random partial differential equation by means of a random conjugation, which is then recast into  an auxiliary Hilbert space. For the obtained equation, it is firstly proved that it generates a random dynamical system (RDS)  in the auxiliary Hilbert space. Then it is shown that the equation possesses random attractors by a uniform estimate of the solution and  the asymptotic compactness of the generated RDS.  After establishing the variational equation in the auxiliary Hilbert space and the $\mathbb{P}$ almost surely differentiable properties of the RDS, an upper estimate of both Hausdorff and fractal dimensions of the random attractors are obtained.
\end{abstract}

\bigskip

{\bf Key words} {\em Hausdorff dimension, fractal dimension, random dynamical system, random attractors, stochastic delayed partial differential  equations}

\section{Introduction}
Existence and estimation of topological dimensions of  attractors play  important roles in the study of the long time behavior of deterministic or random dynamical systems. For many infinite dimensional systems generated by deterministic or stochastic partial differential equations and delay differential equations, the existence of attractors can  reduce the essential part of the flow to a compact set. The finite dimensionality of the attractors, which represents the number of degrees of freedom presented in the long term dynamics of the system can further simplify global dynamics of complex nonlinear systems and hence it is of great significance.

The theory  of attractors for deterministic infinite dimensional dynamical systems has been well established (see the monograph \cite{JH}). On the other hand, the study of random attractors for RDSs dates back to the pioneer works \cite{9,10,FS}, where H. Crauel, F. Flandoli, B. Schmalfu\ss, amongst others, generalized the concept of global attractors of infinite dimensional dissipative systems and  established the basic framework of random attractors for infinite dimensional RDSs. Since then,  the existence, dimension estimation and qualitative properties of random attractors for various stochastic nonlinear evolution equations or stochastic functional differential equations have been investigated by many  researchers. For example, for the stochastic reaction-diffusion equation without time delay, Caraballo et al. \cite{11}, Gao et al. \cite{13} and Li and Guo \cite{12} explored the existence of global attractors on bounded domains. In \cite{24}, \cite{15}  and \cite{14}, the authors obtained the existence of global attractors on unbounded domains. For the stochastic reaction diffusion equation with  delay, the existence of random attractors and   their structure have been studied in \cite{17,16,18,LG20,25} and the references therein.

Criteria for the  finite Hausdorff dimensionality of attractors for deterministic fluid dynamics models have been derived by  Douady and  Oesterle \cite{DO}, which was later generalized by  Constantin,  Foias and  Temam \cite{CFT} (see also   Temam \cite{TR}). Then, it was further extended to the stochastic case in \cite{CF} and \cite{SB}, where the RDS is first linearized and  the global Lyapunov exponents of the  linearized mapping is then examined. The main difficulty of this method lies in controlling the difference between the original nonlinear RDS and its linearization, since in the stochastic case,  the attractor is a random set which is not uniformly bounded. A. Debussche  showed that the random attractors of many random dynamical systems generated by dissipative evolution equations  have finite Hausdorff dimension by an ergodicity argument in \cite{DA97} and further gave a precise bound on
the dimension by combining the method of   linearization and Lyapunov exponents in \cite{DA98}. With respect to the fractal dimensionality of random sets,  Langa    proved the finite fractal dimensionality of the random attractor associated to a model from fluid dynamics in \cite{L}.  Langa and Robinson generalized the method in \cite{DA98} to the fractal dimension by requiring differentiability of RDS in \cite{LR}. Recently, the above established framework were generalized and adopted to various stochastic and random evolution equations. For instance, Fan  proved the existence of random attractor and obtained an upper bound of the Hausdorff and fractal dimension of the random attractor for a stochastic wave equations in \cite{FX08} by using the method in \cite{DA98}. In the recent work  \cite{ZM}, Zhou and Zhao proved the finiteness of fractal dimension of random attractor for stochastic damped wave equation with linear multiplicative white noise.

Despite the fact that the  finite Hausdorff and fractal dimensionality of attractors for  abstract RDSs and applications to stochastic partial differential equations (SPDEs) have been extensively and intensively studied, to our best knowledge, the estimation of dimensions of  SPDEs with delay, i.e., the stochastic partial functional differential equations (SPFDEs) have not been extensively studied.  There are only some early results on the existence and local stability of solutions \cite{TLT02,CL99,HZ} and recent results on the existence and qualitative properties of random attractors \cite{14,25,LG20,HZ20,HZT}. Indeed, even the  dimension estimation of attractors for delayed partial differential equations is scare even for the deterministic case since. The only work concern about dimensions of attractors for partial functional differential equations (PFDEs) we can find are \cite{SW91} and the very recent work \cite{QY}. In this paper, we make an attempt to estimate topological dimensions of random attractors for the stochastic delayed partial differential equation. Specifically, we consider the following SPFDE with additive noise
\begin{equation}\label{1}
 \displaystyle\frac{du(t)}{dt}=A u(t)-\mu u(t)-L u_{t} +f\left(u_{t}\right)+\sum_{j=1}^{m} g_{j}\frac{\mathrm{d} w_{j}(t)}{\mathrm{d}t}.
\end{equation}
Here, $u(t)\in \mathbb{X}$ and $\mathbb{X}$ is an arbitrary  Hilbert space with norm $\|\cdot\|_{\mathbb{X}}$ and inner product $(\cdot,\cdot)_{\mathbb{X}}$. $A$ is a linear elliptic differential operator, $u_{t}$ is an element of $\mathcal{L}$ defined by $u_{t}(\xi)=u(t+\xi)$ for $\xi \in [-\tau, 0]$, where $\mathcal{L}\triangleq L^2([-\tau,0], \mathbb{X})$ is the Hilbert space  of all square Lebesgue  integral functions from $[-\tau, 0]$ to $\mathbb{X}$ equipped with the  norm  $\|\varphi\|_{\mathcal{L}}=[\int_{-\tau}^0\|\varphi(s)\|_{\mathbb{X}}^2ds]^{1/2}$  for all $\varphi\in \mathcal{L}$. $L: \mathcal{L} \mapsto \mathbb{X}$ is a bounded linear operator,  $f : \mathcal{L} \mapsto \mathbb{X}$ is an everywhere defined and nonlinear Lipschitz continuous operator. $\{g_j\}_{j=1}^{m}\subseteq \mathbb{X}$ and  $\{Ag_j\}_{j=1}^{m}\subseteq \mathbb{X}$ stand for the intensity and the shape of noise, $\{w_j\}_{j=1}^{m}$ are mutually independent two-sided real-valued Wiener process on an appropriate probability space to be specified below. Equation \eqref{1} can model  many processes from chemistry or mathematical biology. For instance, in the case $A=\Delta$, \eqref{1} can describe the evolution of mature populations for age-structured species, where $A,\mu$ represent spatial diffusion and death rate of mature individuals, $Lu_t$ and $f(u_t)$ represent death rate of immature individuals  and  birth rate respectively, $\sum_{j=1}^{m} g_{j}\frac{\mathrm{d} w_{j}(t)}{\mathrm{d}t}$ stands for the random perturbations or environmental effects.

The main difficulty for studying the topological dimensions  of \eqref{1} lies in the fact that the natural phase spaces for deterministic or stochastic PFDEs  are Banach spaces while all the above mentioned theories are established for dynamical systems in Hilbert spaces. Hence,  in \cite{SW91},  the authors associated the deterministic PFDE  with a nonlinear semigroup on a product space, i.e. a Hilbert space. In this paper, we extend the method established in \cite{SW91} to the stochastic case. Nevertheless, the extension is not trivial since the RDSs are nonautonomous in nature and the random attractor  is not uniformly bounded. In \cite{SW91},  the authors assumed that the deterministic PFDEs are dissipative which directly implies the existence of attractors in the auxiliary Hilbert space.  In this paper, we will also give details for proving  the  existence of random attractors   for \eqref{1} in the auxiliary Hilbert space.

The rest of  this paper is organized as follows. In Section 2, we introduce some notation, hypotheses and  recast \eqref{1}  into a Hilbert space.  In Section 3, we prove  the obtained auxiliary equation admits a global mild solution which generates a RDS and possesses random attractors under certain conditions. In Section 4, we obtain an upper bound of the Hausdorff and fractal dimensions for the random attractors of the auxiliary equation, which directly implies the finite dimensionality of the original equation \eqref{1}. Finally, we conclude the paper and point out some potential research directions.

\section{Auxiliary equation}
In this paper, we consider the canonical probability space $(\Omega, \mathcal{F}, P)$  with
$$
\Omega=\left\{\omega=\left(\omega_{1}, \omega_{2}, \ldots, \omega_{m}\right) \in C\left(\mathbb{R} ; \mathbb{R}^{m}\right): \omega(0)=0\right\}
$$
and $\mathcal{F}$ being the Borel $\sigma$-algebra induced by the compact open topology of $\Omega,$ while $P$ being the corresponding Wiener measure on $(\Omega, \mathcal{F})$. Then, we identify $W(t)$ with $\omega(t)$, i.e.,
$$
W(t)\equiv \left(\omega_{1}(t), \omega_{2}(t), \ldots, \omega_{m}(t)\right)\quad \text { for } t \in \mathbb{R}.
$$
and the time shift by $$\theta_{t} \omega(\cdot)=\omega(\cdot+t)-\omega(t), t \in \mathbb{R}.$$

In the following, we follow the idea of \cite{DLS03} to transform  \eqref{1} into a pathwise deterministic equation. The same idea has been adopted by many authors when dealing with random attractors or invariant manifolds for various stochastic evolution equations, such as \cite{DLS04,HZ20,LG20,LS07}.
Consider the stochastic stationary solution of the one dimensional Ornstein-Uhlenbeck equation
\begin{equation}\label{2.1}
\mathrm{d} z_{j}+\mu z_{j} \mathrm{d} t=\mathrm{d} w_{j}(t), j=1, \ldots, m,
\end{equation}
which is given by
\begin{equation}\label{2.2}
z_{j}(t) \triangleq z_{j}\left(\theta_{t} \omega_{j}\right)=-\mu \int_{-\infty}^{0} e^{\lambda s}\left(\theta_{t} \omega_{j}\right)(s) \mathrm{d} s, \quad t \in \mathbb{R}.
\end{equation}
By Definition \ref{defn4} (in Section 3), one can see that the random variable $\left|z_{j}\left(\omega_{j}\right)\right|$ is tempered and $z_{j}\left(\theta_{t} \omega_{j}\right)$ is $P$-a.e. $\omega$ continuous. Therefore,  Proposition 4.3.3 in \cite{AL} implies that there exists a tempered function $0<r(\omega)<\infty$ such that
\begin{equation}\label{2.7}
\sum_{j=1}^{m}\left|z_{j}\left(\omega_{j}\right)\right|^{2} \leq r(\omega),
\end{equation}
where $r(\omega)$ satisfies, for $P$-a.e. $\omega \in \Omega$,
\begin{equation}\label{3.8}
r\left(\theta_{t} \omega\right) \leq e^{\frac{\mu}{2}|t|} r(\omega), \quad t \in \mathbb{R}.
\end{equation}
Combining \eqref{3.7} with \eqref{3.8}, we obtain that  for $P$-a.e. $\omega \in \Omega$,
\begin{equation}\label{3.9}
\sum_{j=1}^{m}\left|z_{j}\left(\theta_t\omega_{j}\right)\right|^{2} \leq e^{\frac{\mu}{2}|t|} r(\omega), \quad t \in \mathbb{R}.
\end{equation}
Moreover, we have
\begin{equation}\label{3.9a}
\sum_{j=1}^{m}\left|z_{j}\left(\theta_{\xi}\omega_{j}\right)\right|^{2} \leq  e^{\frac{\mu\tau}{2}}r(\omega),
\end{equation}
for any $\xi\in [-\tau,0]$ and $P$-a.e. $\omega \in \Omega$.
Putting $z\left(\theta_{t} \omega\right)=\sum_{j=1}^{m} g_{j} z_{j}\left(\theta_{t} \omega_{j}\right)$, we have
$$
\mathrm{d} z+\mu z \mathrm{d} t=\sum_{j=1}^{m} g_{j} \mathrm{d} w_{j}.
$$
 Take the transformation $v(t)=u(t)-z\left(\theta_{t} \omega\right)$. Then,  simple computation gives
\begin{equation}\label{2.3}
\displaystyle \frac{dv(t)}{dt}=\displaystyle  Av(t)-\mu v(t)-Lv_t-L z(\theta_{t+\cdot}\omega)+f\left(v_t+z(\theta_{t+\cdot}\omega)\right)+Az(\theta_{t}\omega).
\end{equation}

In order to estimate topological dimensions  of the random attractors of  \eqref{1}, unlike previous works \cite{17,16,18,LG20,25}, where $v_t$ is taken as the state and $\mathcal{L}$ as state space for the above obtained pathwise deterministic delayed equation \eqref{2.3}, we take $V(t)=(v_t,v(t))$ as state space and recast the equation into an auxiliary product space $H=\mathcal{L}\times \mathbb{X}$ equipped with the inner product
$$
((\phi, h),(\psi, k))=\int_{-\tau}^{0}(\phi(s), \psi(s))_{\mathbb{X}} ds+(h, k)_{\mathbb{X}} \quad \text { for } \quad(\phi, h), (\psi, k) \in H
$$
and norm
$$
\|(\phi, h)\|=((\phi, h),(\phi, h))^{1 / 2} \quad \text { for } \quad(\phi, h) \in H,
$$
making $H$ a Hilbert space and hence we can overcome the lack of Hilbert space geometry in applying  the abstract theory established in \cite{DA97,DA98,L,LR}. Furthermore,  recasting \eqref{1} into the Hilbert space $H$ also facilitate us to construct an appropriate variational equation. Take  $V(t)=(v_t, v(t))^T$,
\begin{equation}\label{2.4a}
\tilde{f}(t, \theta_{t}\omega,v_t)\triangleq Az(\theta_{t}\omega)-L z(\theta_{t+\cdot}\omega)+f\left(v_t+z(\theta_{t+\cdot}\omega)\right)
\end{equation}
and
\begin{equation}\label{2.4b}
F(t, \theta_{t}\omega,V(t))=(0, \tilde{f}(t,\theta_{t}\omega,v_t)).
\end{equation}
We consider the following auxiliary random partial differential equation on $H$.
\begin{equation}\label{2.4}
\left\{\begin{array}{l}
\displaystyle \frac{dV(t)}{dt}=\displaystyle  \tilde{A}V(t)- \tilde{L}V(t)+F(t, \theta_{t}\omega, V(t)),\\
V(0)=(\phi,h), \quad(\phi, h) \in H,
\end{array}\right.
\end{equation}
where  operator $\tilde{A}$ is defined as
\begin{equation}\label{Atilde}
\tilde{A}:=\left(\begin{array}{cc}
\frac{d}{d t} & 0 \\
0 & A
\end{array}\right),
\end{equation}
with domain
$$
\begin{aligned}
D(\tilde{A})=\{(\phi, h) \in H: \phi &  \text { is differentialble on }[-\tau, 0], \left.\dot{\phi} \in \mathcal{L} \text { and } h=\phi(0) \in D(A)\right\}.
\end{aligned}
$$
The linear operator $\tilde{L}$ is defined by
$$
\tilde{L}:=\left(\begin{array}{cc}
-L & 0 \\
0 & -\mu I
\end{array}\right).
$$

Throughout the remaining part of this paper, we always impose the following assumptions on $A, L$ and the nonlinear term $f$:

$\mathbf{Hypothesis \    A1}$ $A: D(A) \subset \mathbb{X} \rightarrow \mathbb{X}$ is a densely defined linear operator  that  generates a strongly continuous compact semigroup $S(t)$ on $\mathbb{X}$.  $L$ satisfies $\|L\|\triangleq \sup _{\phi \in \mathcal{L}, \|\phi\|_\mathcal{L}=1}\|L\phi\|_\mathbb{X}\leq \mu$. Moreover, $\varrho\triangleq s(\tilde{A})-\mu<0$, where $s(\tilde{A})$ is defined by  $s(\tilde{A}):=\sup \{\Re \lambda: \lambda \in \sigma(\tilde{A})\}$ representing the spectral bound of the linear operator $\tilde{A}$.

$\mathbf{Hypothesis \  A2}$   $f$ is Lipschitz continuous with $\mathbf{0}$ being a fixed point, that is, $f(\mathbf{0})=\mathbf{0}$ and $\|f(\phi)-f(\varphi)\|_{\mathbb{X}}\leq L_f\|\phi-\varphi\|_{\mathcal{L}}$ for any $\phi, \varphi\in \mathcal{L}$.

It follows from $\mathbf{Hypothesis \  A1}$ that
\begin{equation}\label{2.4e}
\begin{aligned}
\|\tilde{L}\|\triangleq \sup _{\varphi \in H, \|\varphi\|=1}\|\tilde{L}\varphi\|\leq \mu.
\end{aligned}
\end{equation}
It follows from $\mathbf{Hypothesis \    A1}$, Lemma 3.6, Theorem 3.25 in \cite{BP} that the operator $(\tilde{A},D(\tilde{A}))$ is closed and densely defined on $H$, and generates a strongly continuous semigroup $\tilde{S}(t)$ given by
$$
\tilde{S}(t):=\left(\begin{array}{cc}
S(t) & 0 \\
S_t & T_0(t)
\end{array}\right),
$$
where $\left(T_0(t)\right)_{t \geq 0}$ is the nilpotent left shift semigroup on $\mathcal{L}$, and $S_t: \mathbb{X} \rightarrow \mathcal{L}$ is defined by
$$
\left(S_t x\right)(\xi):= \begin{cases}S(t+\xi) x & \text { if }-t<\xi \leq 0, \\ 0 & \text { if }-\tau \leq \xi \leq-t .\end{cases}
$$
 Moreover, by Theorem 4.11 in \cite{BP},  we have
$$
\|\tilde{S}(t)\|\leqslant e^{s(\tilde{A}) t}, \quad t \geqslant 0.\\
$$

Since for $P$-a.e. $\omega \in \Omega$, \eqref{2.4} is a path-wise deterministic equation, it follows from Theorem 6.1.5 Pazy \cite{PA} that  \eqref{2.4} admits a global classical solution which can be represented by a integral equation based on the variation of  constants formula.

\begin{thm} \label{thm2.1}
Assume that $\mathbf{Hypothesis \    A1}$ holds and $f$ is continuously differentiable. Then, for each $(\phi, h) \in H$, there exists a continuous function $V(\cdot,\omega, (\phi, h)):[0, \infty) \rightarrow H$ such that
\begin{equation}\label{2.5}
V(t,\omega, (\phi, h))=e^{-\tilde{L}t}\tilde{S}(t)(\phi, h)+\int_{0}^{t} e^{-\tilde{L}(t-s)}\tilde{S}(t-s)F(s, \theta_{s}\omega, V(s,\omega, (\phi, h))) d s, \quad t \geqslant 0
\end{equation}
for $P$-a.e. $\omega \in \Omega$.
Moreover, if $(\phi, h) \in D(\tilde{A})$, then $V(t,\omega, (\phi, h))$ is a strong solution of  \eqref{2.4}.
\end{thm}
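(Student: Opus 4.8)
The plan is to treat \eqref{2.4}, for $P$-a.e.\ fixed $\omega$, as a semilinear evolution equation on $H$ with a purely deterministic (though nonautonomous) forcing term, and to run a contraction-mapping argument; the strong-solution assertion then follows from the regularity theory already invoked before the statement. First I would record the properties of the linear part: since $\tilde{S}(t)$ is a $C_0$-semigroup on $H$ with $\|\tilde{S}(t)\|\le e^{s(\tilde{A})t}$ and $\tilde{L}$ is bounded with $\|\tilde{L}\|\le\mu$ by \eqref{2.4e}, the operator $\tilde{A}-\tilde{L}$ generates a $C_0$-semigroup on $H$ by the bounded perturbation theorem; by the block structure of $\tilde{S}(t)$ and $\tilde{L}$ this semigroup coincides with $\tilde{U}(t):=e^{-\tilde{L}t}\tilde{S}(t)$, and it obeys $\|\tilde{U}(t)\|\le e^{\beta t}$ with $\beta=s(\tilde{A})+\mu$. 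Next I would record the properties of the forcing: by \eqref{2.4a}--\eqref{2.4b} and the standing \textbf{Hypothesis A2}, for every $t$ and $P$-a.e.\ $\omega$ the map $V\mapsto F(t,\theta_t\omega,V)$ is globally Lipschitz on $H$ with constant $L_f$, \emph{uniformly} in $t$ and $\omega$, because its only $V$-dependence is through $f(v_t+z(\theta_{t+\cdot}\omega))$ and $\|v_t-\bar v_t\|_{\mathcal L}\le\|V-\bar V\|_H$; and, using the continuity of $t\mapsto z_j(\theta_t\omega_j)$ noted after \eqref{2.2} together with $\{g_j\}\subset\mathbb X$, $\{Ag_j\}\subset\mathbb X$, the map $t\mapsto F(t,\theta_t\omega,V(t))$ is continuous on any $[0,T]$ whenever $V\in C([0,T];H)$ (continuity of $t\mapsto Az(\theta_t\omega)$ is immediate, of $t\mapsto z(\theta_{t+\cdot}\omega)\in\mathcal L$ by dominated convergence, and of $t\mapsto v_t\in\mathcal L$ by the standard history-segment estimate).

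With these preliminaries, I would fix $T>0$ and define $\mathcal{T}$ on $C([0,T];H)$ by letting $(\mathcal{T}V)(t)$ be the right-hand side of \eqref{2.5}. Strong continuity of $\tilde{U}$, the continuity remarks above, and the usual estimate for mild-solution convolutions show $\mathcal{T}$ maps $C([0,T];H)$ into itself. To obtain a contraction on the \emph{whole} interval at once (and so avoid a separate continuation step), I would equip $C([0,T];H)$ with the Bielecki-type norm $|||V|||:=\sup_{t\in[0,T]}e^{-\gamma t}\|V(t)\|$; using $\|\tilde{U}(t-s)\|\le e^{\beta(t-s)}$ and the uniform Lipschitz bound one gets
\[
e^{-\gamma t}\|(\mathcal{T}V_1-\mathcal{T}V_2)(t)\|\le L_f\int_0^t e^{(\beta-\gamma)(t-s)}e^{-\gamma s}\|V_1(s)-V_2(s)\|\,ds\le \frac{L_f}{\gamma-\beta}\,|||V_1-V_2|||
\]
for any $\gamma>\beta$, so choosing $\gamma>\beta+L_f$ makes $\mathcal{T}$ a strict contraction. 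The Banach fixed point theorem then yields, for each $(\phi,h)\in H$ and $P$-a.e.\ $\omega$, a unique $V(\cdot,\omega,(\phi,h))\in C([0,T];H)$ satisfying \eqref{2.5} on $[0,T]$. As $T$ is arbitrary and solutions on $[0,T]$ and $[0,T']$ agree on their common interval by uniqueness, they glue into a continuous function $V(\cdot,\omega,(\phi,h)):[0,\infty)\to H$ solving \eqref{2.5}; note that it is the uniform-in-$t$ Lipschitz constant supplied by \textbf{Hypothesis A2} (rather than the dissipativity used in \cite{SW91}) that lets us reach \emph{every} initial datum in $H$.

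For the final assertion, when $(\phi,h)\in D(\tilde{A})$ I would appeal to the regularity theory already cited before the statement, namely Theorem~6.1.5 of Pazy \cite{PA}: since $f$ is continuously differentiable, $F(t,\theta_t\omega,\cdot)$ is, for $P$-a.e.\ $\omega$, continuously differentiable on $H$ by \eqref{2.4a}, and $t\mapsto F(t,\theta_t\omega,V(t))$ is continuous; hence the mild solution \eqref{2.5} with initial value in $D(\tilde{A})$ is a strong (indeed classical) solution of \eqref{2.4}. I expect the contraction estimate and the gluing to be routine; the point needing care is the regularity bookkeeping, i.e.\ (i) reducing continuity of $t\mapsto F(t,\theta_t\omega,V(t))$ to the continuity and local boundedness of the Ornstein--Uhlenbeck paths $t\mapsto z_j(\theta_t\omega_j)$, propagated through the $\mathcal L$-valued maps $t\mapsto z(\theta_{t+\cdot}\omega)$ and $t\mapsto v_t$, and (ii) checking that the hypotheses of the cited Pazy theorem are met pathwise in $\omega$ — this is exactly where the $C^1$-assumption on $f$ enters.
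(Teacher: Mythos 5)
Your proposal is correct and takes essentially the same route as the paper, which offers no proof at all beyond a one-sentence pathwise appeal to Theorem~6.1.5 of Pazy: your Bielecki-norm contraction argument is exactly the standard proof underlying that citation (global existence for a uniformly Lipschitz nonlinearity, then Pazy's regularity for data in $D(\tilde{A})$), and the $C^1$ assumption on $f$ enters where you say it does. The one step you assert too quickly is that $e^{-\tilde{L}t}\tilde{S}(t)$ is the semigroup generated by $\tilde{A}-\tilde{L}$: this requires $\tilde{L}$ to commute with $\tilde{S}(t)$, which does not follow from ``block structure'' alone because of the off-diagonal entry $S_t$ in $\tilde{S}(t)$; however, the paper makes the identical identification without comment in \eqref{2.5}, it is harmless for the existence of a continuous fixed point of \eqref{2.5}, and it only matters for linking \eqref{2.5} to the abstract equation \eqref{2.4} in the strong-solution claim.
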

\begin{rem}\label{rem2.1}
For the purpose of proving existence of random attractors and estimating their dimensions,  we always need the existence of strong solution to \eqref{2.5}. Therefore, in the remaining part of this paper, if not particularly specified, we always assume that initial condition $(\phi, h) \in D(\tilde{A})$ and hence $h=\phi(0)$.
\end{rem}
\section{Random attractors}
This section is devoted to showing  the existence of random attractors for the auxiliary equation  \eqref{2.4}. In the sequel, we first introduce the concept of random attractor and random dynamical systems following \cite{AL} and  \cite{9,10,FS}. Subsequently, we prove the existence of tempered pullback attractors for the the auxiliary equation  \eqref{2.4} by first establishing a uniform estimation for the solution and then proving that the RDS generated by \eqref{1} is  pullback asymptotically compact. Unlike the previous works \cite{17,16,18,25}, we prove the uniform a priori estimates of the solution by using the semigroup approach  instead of taking inner product.

\begin{defn}\label{defn1}
Let $\left\{\theta_{t}: \Omega \rightarrow \Omega, t \in \mathbb{R}\right\}$ be a family of measure preserving transformations such that $(t, \omega) \mapsto \theta_{t} \omega$ is measurable and $\theta_{0}=\mathrm{id}$, $\theta_{t+s}=\theta_{t} \theta_{s},$ for all $s, t \in \mathbb{R}$. The flow $\theta_{t}$ together with the probability space $\left(\Omega, \mathcal{F}, P,\left(\theta_{t}\right)_{t \in \mathbb{R}}\right)$ is called a metric dynamical system.
\end{defn}
 It follows from Definition \ref{defn1} that  $\left(\Omega, \mathcal{F}, P,\left(\theta_{t}\right)_{t \in \mathbb{R}}\right)$ is a metric dynamical system, where $\left(\Omega, \mathcal{F}, P\right)$ is defined in Section 2. Moreover, $\theta$ is ergodic. For a given separable Hilbert space $(H, \|\cdot\|_H)$,  denote by $\mathcal{B}(H)$ the  Borel-algebra of open subsets in $H$.
\begin{defn}\label{defn2}
A mapping $\Phi: \mathbb{R}^{+} \times \Omega \times H   \rightarrow H$ is said to be a random dynamical system (RDS) on a complete separable metric space  $(H,d)$ with Borel  $\sigma$-algebra  $\mathcal{B}(H)$ over the metric dynamical system $\left(\Omega, \mathcal{F}, P,\left(\theta_{t}\right)_{t \in \mathbb{R}^{+}}\right)$ if \\
(i) $\Phi(\cdot, \cdot, \cdot): \mathbb{R}^{+} \times \Omega \times H   \rightarrow H$ is $(\mathcal{B}(\mathbb{R}^{+})\times \mathcal{F}\times\mathcal{B}(H), \mathcal{B}(H))$-measurable;\\
(ii) $\Phi(0, \omega,\cdot)$ is the identity on  $H$ for $P$-a.e. $\omega \in \Omega$;\\
(iii) $\Phi(t+s, \omega,\cdot)=\Phi(t, \theta_{s} \omega,\cdot) \circ \Phi(s, \omega,\cdot),   \text { for all } t, s \in \mathbb{R}^{+}$ for $P$-a.e. $\omega \in \Omega$.\\
A RDS $\Phi$ is continuous or differentiable if $\Phi(t, \omega,\cdot): H \rightarrow H$ is continuous or differentiable for all $t\in \mathbb{R}^+$ and $P$-a.e. $\omega \in \Omega$.
\end{defn}
\begin{defn}\label{defn3}A set-valued map $\Omega \ni \omega \mapsto D(\omega) \in 2^{H}$, such that $D(\omega)$ is closed, is said to be a random set in $H$ if the mapping $\omega \mapsto d(x, D(\omega))$ is $(\mathcal{F}, \mathcal{B}(\mathbb{R}))$-measurable for any $x \in H,$ where $d(x, D(\omega))\triangleq \inf _{y\in  D(\omega)} \mathrm{d}(x, y)$ is the distance in $H$ between the element $x$ and the set $D(\omega)  \subset H$.
\end{defn}
\begin{defn}\label{defn4}A random set $\{D(\omega)\}_{\omega \in \Omega}$ of $H$ is called tempered with respect to $\left(\theta_{t}\right)_{t \in \mathbb{R}}$ if for $P$-a.e. $\omega \in \Omega$,
$$
\lim _{t \rightarrow \infty} e^{-\beta t} d\left(D\left(\theta_{-t} \omega\right)\right)=0, \quad \text { for all } \beta>0,
$$
where $d(D)=\sup _{x \in D}\|x\|_{H}$.
\end{defn}

\begin{defn}\label{defn5}
Let $\mathcal{D}=\{D(\omega)\subset H, \omega\in\Omega\}$ be a family of random sets.  A random set $K(\omega) \in \mathcal{D}$ is said to be a $\mathcal{D}$-pullback absorbing set for $\Phi$ if for $P$-a.e. $\omega \in \Omega$ and for every $B \in \mathcal{D},$ there exists $T=T(B, \omega)>0$ such that
\[
\Phi\left(t, \theta_{-t} \omega,B\left(\theta_{-t} \omega\right)\right)  \subseteq K(\omega) \quad \quad \text { for all } t \geq T.
\]
If, in addition, for all  $\omega \in \Omega, K(\omega)$ is measurable in $\Omega$ with respect to $\mathcal{F},$ then we say $K$ is a closed measurable $\mathcal{D}$-pullback absorbing set for $\Phi$.
\end{defn}

\begin{defn}\label{defn6}
 A RDS $\Phi$ is said to be $\mathcal{D}$-pullback asymptotically compact in $H$ if for $P$-a.e. $\omega \in \Omega$, $\left\{\Phi\left(t_{n}, \theta_{-t_{n}} \omega, x_{n}\right)\right\}_{n \geq 1}$ has a convergent subsequence in $H$ whenever $t_{n} \rightarrow \infty$ and $x_{n} \in D\left(\theta_{-t_{n}} \omega\right)$ for any given $D \in \mathcal{D}$.
\end{defn}

\begin{defn}\label{defn7}
A compact random set $\mathcal{A}(\omega)$ is said to be a $\mathcal{D}$-pullback random attractor associated to the RDS
$\Phi$ if it satisfies the invariance property
$$\Phi(t, \omega) \mathcal{A}(\omega)=\mathcal{A}\left(\theta_{t} \omega\right), \quad \text { for all } t \geq 0 $$
and the pullback attracting property
\[
\lim _{t \rightarrow \infty} \operatorname{dist}\left(\Phi\left(t, \theta_{-t} \omega\right)D\left(\theta_{-t} \omega\right), \mathcal{A}(\omega)\right)=0, \quad \text { for all } t \geq 0, D \in \mathcal{D}, P-a.e.\  \omega\in \Omega.
\]
where  $\operatorname{dist} (\cdot, \cdot)$ denotes the Hausdorff semidistance
\[
\operatorname{dist}(A, B)=\sup _{x \in A} \inf _{y\in  B} \mathrm{d}(x, y), \quad A, B \subset  H.
\]
\end{defn}

\begin{lem}\label{lem1}
Let $(\theta, \Phi)$ be a continuous random dynamical system. Suppose that $\Phi$ is $\mathcal{D}$-pullback asymptotically compact and has a closed pullback $\mathcal{D}$-absorbing random set $K=\{K(\omega)\}_{\omega \in \Omega} \in \mathcal{D}$. Then it possesses a random attractor $\{\mathcal{A}(\omega)\}_{\omega \in \Omega},$ where
$$
\mathcal{A}(\omega)=\cap_{\tau \geq 0} \overline{\cup_{t \geq \tau} \Phi\left(t, \theta_{-t} \omega, K\left(\theta_{-t} \omega\right)\right)}.
$$
\end{lem}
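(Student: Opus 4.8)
The plan is to recognize the set $\mathcal{A}(\omega)$ in the statement as the pullback $\omega$-limit set of the absorbing set $K$ and to run the classical argument for the existence of random attractors in its pullback formulation. First I would record the elementary characterization that underlies everything else: $y\in\mathcal{A}(\omega)$ if and only if there exist sequences $t_n\to\infty$ and $x_n\in K(\theta_{-t_n}\omega)$ with $\Phi(t_n,\theta_{-t_n}\omega,x_n)\to y$ in $H$; this is immediate from the definition of $\mathcal{A}(\omega)$ as a decreasing intersection of closures. Using this characterization together with the $\mathcal{D}$-pullback asymptotic compactness of $\Phi$ (note $K\in\mathcal{D}$), I would then show $\mathcal{A}(\omega)$ is nonempty and compact: choosing any $x_n\in K(\theta_{-t_n}\omega)$ along some $t_n\to\infty$, asymptotic compactness gives a subsequence of $\Phi(t_n,\theta_{-t_n}\omega,x_n)$ converging to a point which, by the characterization, lies in $\mathcal{A}(\omega)$, so $\mathcal{A}(\omega)\neq\emptyset$; and for an arbitrary sequence $y_k\in\mathcal{A}(\omega)$, picking $t_k\geq k$ and $x_k\in K(\theta_{-t_k}\omega)$ with $\|\Phi(t_k,\theta_{-t_k}\omega,x_k)-y_k\|<1/k$ and applying asymptotic compactness once more produces a subsequence of $(y_k)$ converging to a point of $\mathcal{A}(\omega)$, which, combined with the fact that $\mathcal{A}(\omega)$ is closed as an intersection of closed sets, yields compactness.

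Next I would verify the invariance $\Phi(t,\omega)\mathcal{A}(\omega)=\mathcal{A}(\theta_t\omega)$ using the cocycle property (iii) of Definition \ref{defn2} and the continuity of $\Phi(t,\omega,\cdot)$. For $y\in\mathcal{A}(\omega)$ with approximating data $t_n\to\infty$, $x_n\in K(\theta_{-t_n}\omega)$, the cocycle identity gives $\Phi(t,\omega,\Phi(t_n,\theta_{-t_n}\omega,x_n))=\Phi(t+t_n,\theta_{-(t+t_n)}(\theta_t\omega),x_n)$, and letting $n\to\infty$ (using continuity of $\Phi(t,\omega,\cdot)$ and $x_n\in K(\theta_{-(t+t_n)}(\theta_t\omega))$) the characterization at $\theta_t\omega$ shows $\Phi(t,\omega,y)\in\mathcal{A}(\theta_t\omega)$, i.e. $\Phi(t,\omega)\mathcal{A}(\omega)\subseteq\mathcal{A}(\theta_t\omega)$. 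Conversely, for $y\in\mathcal{A}(\theta_t\omega)$ with data $s_n\to\infty$, $x_n\in K(\theta_{-s_n}(\theta_t\omega))=K(\theta_{-(s_n-t)}\omega)$, I would write $\Phi(s_n,\theta_{-s_n}(\theta_t\omega),x_n)=\Phi(t,\omega,\Phi(s_n-t,\theta_{-(s_n-t)}\omega,x_n))$, use asymptotic compactness to extract a limit $z\in\mathcal{A}(\omega)$ of $\Phi(s_n-t,\theta_{-(s_n-t)}\omega,x_n)$, and conclude by continuity that $y=\Phi(t,\omega,z)\in\Phi(t,\omega)\mathcal{A}(\omega)$; hence equality holds.

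For the pullback attraction I would argue in two stages. First, $\mathcal{A}(\omega)$ attracts $K$ itself: if $\operatorname{dist}(\Phi(t_n,\theta_{-t_n}\omega,x_n),\mathcal{A}(\omega))\geq\varepsilon_0>0$ for some $t_n\to\infty$ and $x_n\in K(\theta_{-t_n}\omega)$, then asymptotic compactness yields a subsequential limit which, by the characterization, must lie in $\mathcal{A}(\omega)$ --- a contradiction. Second, for an arbitrary $D\in\mathcal{D}$ and $\varepsilon_0>0$, I would fix a time $\sigma=\sigma(\varepsilon_0,\omega)$ large enough that $\operatorname{dist}(\Phi(\sigma,\theta_{-\sigma}\omega,K(\theta_{-\sigma}\omega)),\mathcal{A}(\omega))<\varepsilon_0$ (possible by the first stage); then, applying Definition \ref{defn5} with base point $\theta_{-\sigma}\omega$, there is $T$ with $\Phi(t-\sigma,\theta_{-t}\omega,D(\theta_{-t}\omega))\subseteq K(\theta_{-\sigma}\omega)$ for $t-\sigma\geq T$, so the cocycle identity gives $\Phi(t,\theta_{-t}\omega,D(\theta_{-t}\omega))=\Phi\bigl(\sigma,\theta_{-\sigma}\omega,\Phi(t-\sigma,\theta_{-t}\omega,D(\theta_{-t}\omega))\bigr)\subseteq\Phi(\sigma,\theta_{-\sigma}\omega,K(\theta_{-\sigma}\omega))$, which is within $\varepsilon_0$ of $\mathcal{A}(\omega)$; letting $t\to\infty$ and then $\varepsilon_0\to0$ gives attraction. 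Finally, one has to check that $\omega\mapsto\mathcal{A}(\omega)$ is a random set; this follows from the measurability of $\Phi$ and of $K$, the separability of $H$, and the continuity of $t\mapsto\Phi(t,\omega,x)$ (which lets one replace the union over $t\geq\tau$ by a countable one), together with the standard measurability calculus for random sets and a measurable selection theorem. The part I expect to demand the most care is this last measurability statement and the careful bookkeeping with the shift $\theta$ and the cocycle identity in the attraction argument; the genuine analytic content is entirely supplied by the asymptotic compactness hypothesis.
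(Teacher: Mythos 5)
The paper states this lemma without proof; it is the standard existence criterion for random pullback attractors quoted from the literature (Crauel--Flandoli, Flandoli--Schmalfu\ss, Arnold). Your proposal reconstructs exactly the classical argument --- the characterization of $\mathcal{A}(\omega)$ as the pullback omega-limit set of $K$, nonemptiness and compactness from asymptotic compactness, invariance from the cocycle identity plus continuity of $\Phi(t,\omega,\cdot)$, and the two-stage attraction argument (first $K$, then an arbitrary $D\in\mathcal{D}$ via absorption into $K(\theta_{-\sigma}\omega)$) --- and every step, including the bookkeeping $\theta_{-s}\theta_{-\sigma}\omega=\theta_{-t}\omega$ with $s=t-\sigma$, checks out. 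The one point left genuinely sketchy is the measurability of $\omega\mapsto\mathcal{A}(\omega)$, which you correctly flag; in the standard references this is settled by reducing $d(x,\mathcal{A}(\omega))$ to a countable infimum/supremum over rational times and a countable dense subset of $H$, so nothing is missing in substance.
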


For convenience, we introduce the following Gr{o}nwall inequality in \cite{17} that will be frequently used in our subsequent proofs.
\begin{lem}\label{lem2}
Let $T>0$ and $u, \alpha, f$ and $g$ be non-negative continuous functions defined on $[0, T]$ such that
\[
u(t) \leq \alpha(t)+f(t) \int_{0}^{t} g(r) u(r) d r, \quad \text { for } t \in[0, T].
\]
Then,
\[
u(t) \leq \alpha(t)+f(t) \int_{0}^{t} g(r) \alpha(r) e^{\int_{r}^{t} f(\tau) g(\tau) d \tau} d r, \quad \text { for } t \in[0, T].
\]
\end{lem}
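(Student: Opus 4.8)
The plan is to reduce the integral inequality to a linear first-order differential inequality and then solve the latter with an integrating factor. First I would set
\[
v(t):=\int_{0}^{t}g(r)u(r)\,dr,\qquad t\in[0,T].
\]
Since $g$ and $u$ are continuous, $v$ is continuously differentiable on $[0,T]$ with $v(0)=0$ and $v'(t)=g(t)u(t)$. Substituting the hypothesis $u(t)\le\alpha(t)+f(t)v(t)$ into this identity and using $g\ge 0$ gives the differential inequality
\[
v'(t)\le g(t)\alpha(t)+f(t)g(t)v(t),\qquad t\in[0,T].
\]

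Next I would introduce the integrating factor $E(t):=\exp\bigl(-\int_{0}^{t}f(\tau)g(\tau)\,d\tau\bigr)$, which is well defined, continuously differentiable (because $fg$ is continuous) and strictly positive. Multiplying the previous inequality by $E(t)>0$ preserves its direction, and after rearranging it reads $\frac{d}{dt}\bigl(E(t)v(t)\bigr)\le g(t)\alpha(t)E(t)$. Integrating from $0$ to $t$ and using $v(0)=0$ yields
\[
E(t)v(t)\le\int_{0}^{t}g(r)\alpha(r)E(r)\,dr ,
\]
and dividing through by $E(t)>0$, together with $E(r)/E(t)=\exp\bigl(\int_{r}^{t}f(\tau)g(\tau)\,d\tau\bigr)$, gives
\[
v(t)\le\int_{0}^{t}g(r)\alpha(r)\,e^{\int_{r}^{t}f(\tau)g(\tau)\,d\tau}\,dr .
\]
Plugging this bound back into $u(t)\le\alpha(t)+f(t)v(t)$ produces exactly the claimed estimate.

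There is no genuine obstacle here; the only points needing a little care are that every function involved stays in the class of continuous (indeed $C^{1}$) functions, so that the fundamental theorem of calculus and the product rule apply without qualification, and that one multiplies and divides only by the strictly positive factor $E(t)$, so that no inequality is ever accidentally reversed. An alternative route would be a Picard-type iteration, repeatedly substituting the inequality into itself and summing the resulting series, but that is more computational, so I would prefer the integrating-factor argument above.
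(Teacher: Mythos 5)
Your argument is correct: the substitution $v(t)=\int_0^t g(r)u(r)\,dr$, the differential inequality $v'\le g\alpha+fgv$, and the integrating factor $E(t)=\exp\bigl(-\int_0^t f(\tau)g(\tau)\,d\tau\bigr)$ together give exactly the stated bound, and each step (the fundamental theorem of calculus, preservation of the inequality under multiplication by $E>0$, and $v(0)=0$) is justified. The paper itself offers no proof of this lemma --- it simply quotes it from a reference --- so there is nothing to compare against; your integrating-factor derivation is the standard one and fills that gap cleanly.
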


Apparently, under the conjugation transformation induced by \eqref{2.2}, no exceptional sets appear  in the equation \eqref{2.4}. By the uniqueness of solution to \eqref{2.4} for each $\omega \in \Omega$, we can see the  mapping
$\Phi(\cdot, \cdot, \cdot): \mathbb{R}^{+} \times \Omega \times H   \rightarrow H$  defined by
 \begin{equation}\label{3.20}
\Phi(t, \omega, (\phi,\phi(0)))=V(t, \omega, (\phi,\phi(0)))
\end{equation}
generates a RDS, which is $(\mathcal{B}(\mathbb{R}^{+})\times \mathcal{F}\times\mathcal{B}(H), \mathcal{B}(H))$-measurable. Let $P_{1}$ and $P_{2}$ be the projections of $H$ onto $\mathcal{L}$ and $\mathbb{X}$ respectively. Then, by Theorem 3.1 and Proposition 3.2 in  \cite{SW91}, we have
 \begin{equation}\label{2.5a}
v_t(\cdot, \omega, \phi)=P_{1} V(t,\omega, (\phi, \phi(0)))
\end{equation}
and
\begin{equation}\label{2.5b}
v(t, \omega, \phi)=P_{2} V(t,\omega, (\phi, \phi(0)))
\end{equation}
for $t \geqslant 0$ and P-a.e. $\omega\in \Omega$, where $v(t, \omega, \phi)$ is the solution to \eqref{2.3}. Therefore, the solution of \eqref{1} can be represented by
\begin{equation}\label{2.5c}
u_t(\cdot,\omega,\phi)=v_t(\cdot, \omega, \phi)+z(\theta_{t+\cdot}\omega)=P_{1} [V(t,\omega, (\phi, \phi(0)))+(z(\theta_{t+\cdot}\omega), z(\theta_{t}\omega))]\triangleq P_{1}\Psi(t,\omega, (\psi, \psi(0)))
\end{equation}
where
the mapping $\Psi: \mathbb{R}^{+} \times \Omega \times H \rightarrow H$ is defined by
\begin{equation}\label{2.5d}
\Psi(t, \omega,(\psi, \psi(0)))\triangleq\Phi(t, \omega, (\phi,\phi(0)))+(z(\theta_{t+\cdot}\omega), z(\theta_{t}\omega))=V(t,\omega, (\phi,\phi(0)))+(z(\theta_{t+\cdot}\omega), z(\theta_{t}\omega))
\end{equation}
and $(\psi, \psi(0))=(\phi,\phi(0)))+(z(\theta_{\cdot}\omega), z(\omega))$. By  the cocycle property of $z$ and $\Phi$, we can see that $\Psi$ is a  RDS on $H$. In the following, we show the existence of random attractor for $\Psi$.

\begin{lem}\label{lem3.3}
Assume that $\mathbf{Hypotheses \  A1-A2}$ hold and $\varrho\triangleq \max \{0,1 / 2+\sigma\}-\mu<\frac{-\mu}{2}$, $\varrho+L_f< 0$, then there exists $\{K(\omega)\}_{\omega \in \Omega} \in \mathcal{D}$ satisfying that, for any $B=\{B(\omega)\}_{\omega \in \Omega} \in \mathcal{D}$ and $P$-a.e. $\omega \in \Omega$, there is $T_{B}(\omega)>0$ such that
$$
\Psi \left(t, \theta_{-t} \omega, B\left(\theta_{-t} \omega\right)\right) \subseteq K(\omega) \quad \text { for all } t \geqslant T_{B}(\omega),
$$
that is, $\{K(\omega)\}_{\omega \in \Omega}$ is a random absorbing set for $\Psi$ in $\mathcal{D}$.
\end{lem}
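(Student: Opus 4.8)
The plan is to extract from the mild-solution formula \eqref{2.5} a pathwise a priori estimate on $\|V(t,\omega,(\phi,\phi(0)))\|$ (as in Remark \ref{rem2.1} we take $(\phi,\phi(0))\in D(\tilde A)$), make it uniform up to a tempered random radius by the pullback substitution $\omega\mapsto\theta_{-t}\omega$, and then transfer the bound from $\Phi$ to $\Psi$ through the conjugacy \eqref{2.5d}. First I would estimate the mild solution. Taking norms in \eqref{2.5} and using $\|\tilde S(t)\|\le e^{s(\tilde A)t}$ together with $\|\tilde L\|\le\mu$, so that the linear evolution $e^{-\tilde L(t-s)}\tilde S(t-s)$ has operator norm $\le e^{\varrho(t-s)}$ with $\varrho<0$ by Hypothesis A1, and recalling $F(s,\theta_s\omega,V)=(0,\tilde f(s,\theta_s\omega,v_s))$, one is reduced to bounding $\|\tilde f(s,\theta_s\omega,v_s)\|_{\mathbb X}$. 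By \eqref{2.4a}, Hypothesis A2 (so $f(\mathbf 0)=\mathbf 0$ and $f$ is $L_f$-Lipschitz) and $\|L\|\le\mu$,
\[
\|\tilde f(s,\theta_s\omega,v_s)\|_{\mathbb X}\le \|Az(\theta_s\omega)\|_{\mathbb X}+\mu\,\|z(\theta_{s+\cdot}\omega)\|_{\mathcal L}+L_f\,\|v_s\|_{\mathcal L}+L_f\,\|z(\theta_{s+\cdot}\omega)\|_{\mathcal L}.
\]
Since $z(\theta_t\omega)=\sum_{j}g_jz_j(\theta_t\omega_j)$ and $Az(\theta_t\omega)=\sum_j(Ag_j)z_j(\theta_t\omega_j)$, all the $z$-terms are bounded by $C\big(\sum_j|z_j(\theta_t\omega_j)|^2\big)^{1/2}$ for a constant $C=C(\{g_j\},\{Ag_j\},\mu,L_f,\tau)$, hence by \eqref{3.9}--\eqref{3.9a} they are dominated by $C\,e^{\mu|s|/4}\sqrt{r(\omega)}$; moreover $\|v_s\|_{\mathcal L}=\|P_1V(s,\omega,(\phi,\phi(0)))\|\le\|V(s,\omega,(\phi,\phi(0)))\|$ by \eqref{2.5a}.

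Writing $p(t)=\|V(t,\omega,(\phi,\phi(0)))\|$, these bounds give the integral inequality
\[
p(t)\le e^{\varrho t}\|(\phi,\phi(0))\|+\int_0^t e^{\varrho(t-s)}\Big(C\,e^{\mu|s|/4}\sqrt{r(\omega)}+L_f\,p(s)\Big)\,ds,
\]
to which I would apply the Gronwall inequality of Lemma \ref{lem2} (with the non-decreasing majorant of the $\omega$-dependent source, $f\equiv L_f$, $g\equiv1$), obtaining
\[
p(t)\le \|(\phi,\phi(0))\|\,e^{(\varrho+L_f)t}+C\sqrt{r(\omega)}\int_0^t e^{(\varrho+L_f)(t-s)}e^{\mu|s|/4}\,ds.
\]
Replacing $\omega$ by $\theta_{-t}\omega$ and using $\theta_s\theta_{-t}\omega=\theta_{s-t}\omega$ with $|s-t|=t-s$ for $0\le s\le t$, the source becomes $C\,e^{\mu(t-s)/4}\sqrt{r(\omega)}$, so the convolution equals $C\sqrt{r(\omega)}\int_0^t e^{(\varrho+L_f+\mu/4)(t-s)}ds$; the hypotheses $\varrho<-\mu/2$ and $\varrho+L_f<0$ force this integral to be bounded uniformly in $t$, so that $\|V(t,\theta_{-t}\omega,(\phi,\phi(0)))\|\le\|(\phi,\phi(0))\|\,e^{(\varrho+L_f)t}+C_1\sqrt{r(\omega)}$ with $C_1\sqrt{r(\omega)}$ tempered.

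It remains to pass to $\Psi$ and to choose $K$. For $(\psi,\psi(0))\in B(\theta_{-t}\omega)$, the corresponding initial datum of $V$ is $(\phi,\phi(0))=(\psi,\psi(0))-(z(\theta_{\cdot-t}\omega),z(\theta_{-t}\omega))$, so by \eqref{3.9} one has $\|(\phi,\phi(0))\|\le d(B(\theta_{-t}\omega))+C_2\,e^{\mu t/4}\sqrt{r(\omega)}$; multiplying by $e^{(\varrho+L_f)t}$, the first term tends to $0$ by temperedness of $B$ together with $\varrho+L_f<0$, and the second by $\varrho<-\mu/2$. Adding the correction $\|(z(\theta_{\cdot}\omega),z(\omega))\|\le C_3\sqrt{r(\omega)}$ coming from \eqref{2.5d}, \eqref{2.7} and \eqref{3.9a}, one obtains $T_B(\omega)>0$ with $\|\Psi(t,\theta_{-t}\omega,(\psi,\psi(0)))\|\le 1+(C_1+C_3)\sqrt{r(\omega)}=:R(\omega)$ for all $t\ge T_B(\omega)$. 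Then $K(\omega):=\{x\in H:\|x\|\le R(\omega)\}$ is closed, $\mathcal F$-measurable, and tempered since $r$ is tempered, so $K\in\mathcal D$, and it absorbs every $B\in\mathcal D$, which proves the claim.

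The step I expect to be the main obstacle is precisely this last bookkeeping of exponents: one must weave together the linear decay rate $\varrho$, the Lipschitz constant $L_f$, and the $e^{\mu|t|/2}$ growth of the Ornstein--Uhlenbeck bound $r(\theta_t\omega)$ so that, after the pullback shift $\omega\mapsto\theta_{-t}\omega$, both the convolution integral remains bounded and the initial-data contribution decays. This is exactly why the strengthened conditions $\varrho<-\mu/2$ and $\varrho+L_f<0$ are imposed rather than merely $\varrho<0$ as in Hypothesis A1, and it reflects the fact that, unlike in the deterministic setting of \cite{SW91}, the random absorbing set cannot be chosen uniformly bounded but only tempered.
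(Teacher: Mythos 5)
Your overall strategy is the same as the paper's: estimate the mild solution \eqref{2.5} using $\|e^{-\tilde L(t-s)}\tilde S(t-s)\|\le e^{\varrho(t-s)}$, bound $\tilde f$ by the Lipschitz property plus the tempered Ornstein--Uhlenbeck terms, run Gr\"onwall, perform the pullback substitution, and take $K(\omega)$ to be a closed ball of tempered radius. However, there is a genuine gap in your exponent bookkeeping, and it occurs exactly at the step you yourself flag as the main obstacle. After the pullback your noise contribution is
$C\sqrt{r(\omega)}\int_0^t e^{(\varrho+L_f+\mu/4)(t-s)}\,ds$, and you assert that ``the hypotheses $\varrho<-\mu/2$ and $\varrho+L_f<0$ force this integral to be bounded uniformly in $t$.'' That implication is false: boundedness requires $\varrho+L_f+\mu/4<0$, which does not follow from the two stated conditions. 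For instance, with $\mu=4$, $\varrho=-2.1$ and $L_f=2$ one has $\varrho<-\mu/2$ and $\varrho+L_f=-0.1<0$, yet $\varrho+L_f+\mu/4=0.9>0$, so your convolution grows exponentially in $t$ and the absorbing radius you construct is not finite.

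The reason the paper's write-up does not run into this is an ordering choice: it first collapses the noise convolution $\int_0^t e^{\varrho(t-s)}p_1(\theta_{s-t}\omega)\,ds\le c\int_0^t e^{(\varrho+\mu/2)(t-s)}r(\omega)\,ds\le c\,r(\omega)$ using only $\varrho+\mu/2<0$ (its \eqref{4.5a}), and only then applies Gr\"onwall, so the noise enters the Gr\"onwall majorant as the term $c e^{-\varrho s}r(\omega)$ rather than as a source still carrying the backward-in-time growth $e^{\mu(t-s)/4}$; the subsequent integral $L_f\int_0^t e^{L_f(t-s)}e^{-\varrho s}\,ds$ then only needs $\varrho+L_f<0$. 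In your version the Lipschitz rate $L_f$ and the noise growth rate $\mu/4$ are convolved against each other inside a single Gr\"onwall pass, which is why you end up needing the stronger (and unassumed) condition $\varrho+L_f+\mu/4<0$. To repair your argument, perform the substitution $\omega\mapsto\theta_{-t}\omega$ and the estimate of the noise convolution \emph{before} invoking Lemma \ref{lem2}, as in \eqref{4.2a}--\eqref{4.6e}. (Be aware that even then the Gr\"onwall majorant $\alpha(s)$ must bound the noise convolution at all intermediate times $s\le t$, where the shifted source behaves like $e^{\mu(t-s)/2}r(\omega)$; this point requires care in the paper's own \eqref{3.7} as well.) The remaining steps of your proposal --- the transfer from $V$ to $\Psi$ via \eqref{2.5d}, the decay of the initial-data term by temperedness of $B$ together with $\varrho+L_f<0$, and the choice of $K(\omega)$ as a tempered closed ball --- are correct and match the paper.
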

\begin{proof}
We first derive uniform estimates of $V$ by \eqref{2.5} and then obtain the existence of an absorbing set for $\Psi$ given by $\Psi(t, \omega,(\phi,\phi(0))=V(t, \omega, (\phi,\phi(0)))+(z(\theta_{t+\cdot}\omega), z(\theta_{t}\omega))$. It follows from \eqref{2.5} that, for any $t>0$,
 \begin{equation}\label{4.1x}
\begin{aligned}
\|V(t,\omega,(\phi, \phi(0)))\|&=\|e^{-\tilde{L}t}\tilde{S}(t)(\phi, \phi(0))+\int_{0}^{t} e^{-\tilde{L}(t-s)}\tilde{S}(t-s)F(s, \theta_{s}\omega, V(s,\omega, (\phi, \phi(0)))) d s\|\\
&\leq e^{\varrho t}\|(\phi, \phi(0))\|+\int_{0}^{t} e^{\varrho(t-s)}\|\tilde{f}(s, \theta_{s}\omega, v_s(\cdot,\omega, \phi)))\|_\mathbb{X} d s\\
&\leq e^{\varrho t}\|(\phi, \phi(0))\|+\int_{0}^{t} e^{\varrho(t-s)}(\|Az(\theta_{s}\omega)\|_\mathbb{X}+(\mu+L_f)\|z(\theta_{s+\cdot}\omega)\|_\mathcal{L})d s\\
&+L_f\int_{0}^{t} e^{\varrho(t-s)}\|v_s(\cdot,\omega,\phi)\|_\mathcal{L}d s\\
&\leq e^{\varrho t}\|(\phi, \phi(0))\|+\int_{0}^{t} e^{\varrho(t-s)}(\|Az(\theta_{s}\omega)\|_\mathbb{X}+(\mu+L_f)\|z(\theta_{s+\cdot}\omega)\|_\mathcal{L})d s\\
&+L_f\int_{0}^{t} e^{\varrho(t-s)}\|V(s,\omega,(\phi, \phi(0)))\|d s\\
\end{aligned}
 \end{equation}
for  $P$-a.e. $\omega \in \Omega$. For the sake of simplicity, we denote $\varpi(\omega)=(\phi(\cdot,\omega), \phi(0,\omega))$. By replacing $\omega$ by $\theta_{-t} \omega$, we derive  from \eqref{4.1x} that, for all $t \geq 0,$
 \begin{equation}\label{4.2a}
\begin{aligned}
\|V(t,\theta_{-t} \omega,\varpi(\theta_{-t}\omega))\|
&\leq e^{\varrho t}\|\varpi(\theta_{-t}\omega)\|+L_f\int_{0}^{t} e^{\varrho(t-s)}\|V(s,\theta_{-t}\omega,\varpi(\theta_{-t}\omega)\|d s\\
&+\int_{0}^{t} e^{\varrho(t-s)}(\|Az(\theta_{-t} \theta_{s}\omega)\|_\mathbb{X}+(\mu+L_f)\|z(\theta_{-t} \theta_{s+\cdot}\omega)\|_\mathcal{L})d s\\
\end{aligned}
 \end{equation}
Since $g_j \in \mathbb{X}$, $Ag_j \in \mathbb{X}$  and $z\left( \omega\right)=\sum_{j=1}^{m} g_{j} z_{j}\left( \omega_{j}\right)$, it follows from \eqref{3.9} and \eqref{3.9a} that there exists a constant $c$ such that $p_1(\omega)\triangleq\|A z\left(\omega\right)\|_\mathbb{X}+(\mu+L_f ) \|z\left(\theta_{\cdot}\omega\right)\|_\mathcal{L}\leq c \sum_{j=1}^{m}\left|z_{j}\left(\omega_{j}\right)\right|^{2}$. Therefore, it follows from \eqref{3.8} and \eqref{3.9} that
\begin{equation}\label{4.5a}
\begin{aligned}
\int_{0}^{t} e^{\varrho(t-s)} p_{1}\left(\theta_{s-t} \omega\right) \mathrm{d} s \leq c \int_{0}^{t} e^{(\varrho+\frac{\mu}{2})(t-s)} r(\omega) \mathrm{d} s \leq c r(\omega),
\end{aligned}
\end{equation}
where the second inequality follows from the assumption that $\rho+\frac{\mu}{2}<0$.
Incorporating \eqref{4.5a} into \eqref{4.2a}  gives rise to
 \begin{equation}\label{4.6e}
\begin{aligned}
\|V(t,\theta_{-t} \omega,\varpi(\theta_{-t}\omega)\|
&\leq e^{\varrho t}\|\varpi(\theta_{-t}\omega)\|+L_f\int_{0}^{t} e^{\varrho(t-s)}\|V(s,\theta_{-t}\omega,\varpi(\theta_{-t}\omega))\|d s+c r(\omega).\\
\end{aligned}
 \end{equation}
Multiplying both sides of \eqref{4.6e} by $e^{-\varrho t}$,
 \begin{equation}\label{4.6f}
\begin{aligned}
e^{-\varrho t}\|V(t,\theta_{-t} \omega,\varpi(\theta_{-t}\omega))\|
&\leq \|\varpi(\theta_{-t}\omega)\|+L_f\int_{0}^{t} e^{-\varrho s}\|V(s,\theta_{-t}\omega,\varpi(\theta_{-t}\omega))\|d s+c e^{-\varrho t}r(\omega).\\
\end{aligned}
 \end{equation}
Hence, by the Gr\"{o}nwall inequality (Lemma \ref{lem2}), we have
 \begin{equation}\label{3.7}
\begin{aligned}
e^{-\varrho t}\|V(t,\theta_{-t} \omega,\varpi(\theta_{-t}\omega))\|
&\leq \|\varpi(\theta_{-t}\omega)\|+c e^{-\varrho t}r(\omega)+L_f\int_{0}^{t} e^{L_{f}(t-s)}(\|\varpi(\theta_{-s}\omega)\|+c e^{-\varrho s}r(\omega))d s\\
\leq & \|\varpi(\theta_{-t}\omega)\|+c e^{-\varrho t}r(\omega)+L_{f}\|\varpi(\theta_{-t}\omega)\|\int_{0}^{t}e^{ L_{f}(t-s)}\mathrm{d} s\\
&+cL_{f}r(\omega)\int_{0}^{t}e^{ L_{f}(t-s)}e^{-\varrho s}\mathrm{d} s.
\end{aligned}
 \end{equation}
Therefore, we have
 \begin{equation}\label{4.8c}
\begin{aligned}
\|V(t,\theta_{-t} \omega,\varpi(\theta_{-t}\omega))\|
&\leq e^{\varrho t}\|\varpi(\theta_{-t}\omega)\|+c r(\omega)+e^{\varrho t}(e^{L_{f}t}-1)\|\varpi(\theta_{-t}\omega)\|\\  &+\frac{c L_{f}}{\varrho+L_{f}}[e^{(L_{f}+\varrho) t}-1]r(\omega).
\end{aligned}
 \end{equation}
Note that $\Psi(t, \omega,\chi(\omega))=V(t,\omega, \varpi(\omega))+(z(\theta_{t+\cdot}\omega), z(\theta_{t}\omega))$
and $\chi(\omega)=\varpi(\omega)+(z(\theta_{\cdot}\omega), z(\omega))$. The above estimate \eqref{4.8c}  implies that, for all $t \geq 0$
 \begin{equation}\label{4.11e}
\begin{aligned}
\|\Psi(t,\theta_{-t} \omega,\chi(\theta_{-t}\omega))\|\leq & \left\|V\left(t, \theta_{-t} \omega, \varpi\left(\theta_{-t} \omega\right)\right)\right\|+\|(z(\theta_{-t}\theta_{t+\cdot}\omega), z(\theta_{-t}\theta_{t}\omega))\| \\
&\leq e^{\varrho t}\|\varpi(\theta_{-t}\omega)\|+2c r(\omega)+e^{\varrho t}(e^{L_{f}t}-1)\|\varpi(\theta_{-t}\omega)\|\\  &+\frac{c L_{f}}{\varrho+L_{f}}[e^{(L_{f}+\varrho) t}-1]r(\omega).
\end{aligned}
 \end{equation}

Therefore, if $\chi \in \mathcal{D}\left(\theta_{-t} \omega\right)$ and $L_{f}+\varrho<0$, then there exists a $T_{\mathcal{D}}>0$ such that, for all $t \geq T_{D}(\omega)$,
\begin{equation}\label{3.12}
\begin{aligned}
e^{\varrho t}\|\varpi(\theta_{-t}\omega)\|+e^{\varrho t}(e^{L_{f}t}-1)\|\varpi(\theta_{-t}\omega)\|+\frac{c L_{f}}{\varrho+L_{f}}e^{(L_{f}+\varrho) t}r(\omega)\leq c_1(\omega),
\end{aligned}
\end{equation}
which, along with \eqref{4.11e} shows that, for all $t \geq T_{\mathcal{D}}(\omega)$
\begin{equation}\label{4.13f}
\begin{aligned}
\|\Psi(t,\theta_{-t} \omega,\chi(\theta_{-t}\omega))\| \leq  2cr(\omega)+\frac{-c L_{f}}{\varrho+L_{f}}r(\omega)+c_1(\omega).
\end{aligned}
\end{equation}
Given $\omega \in \Omega,$ define
\begin{equation}\label{3.14}
\begin{aligned}
K(\omega)=\{\varphi \in H: \|\varphi\| \leq 2cr(\omega)+\frac{-c L_{f}}{\varrho+L_{f}}r(\omega)+c_1(\omega)\}.
\end{aligned}
\end{equation}
Then, $K=\{K(\omega)\}_{\omega \in \Omega} \in \mathcal{D}$. Furthermore, \eqref{4.13f} implies that $K(\omega)$ is a random absorbing set for the RDS $\Psi$ in $\mathcal{D}$.
\end{proof}

\begin{lem}\label{lem3.4}
Assume that $\mathbf{Hypothesis \  A1-A2}$ are satisfied and $\varrho\triangleq \max \{0,1 / 2+\sigma\}-\mu<\frac{-\mu}{2}$, $\varrho+L_f< 0$. Then, the RDS $\Psi$  is $\mathcal{D}$-pullback asymptotically compact  for $t>\tau$, i.e., for $P$-a.e. $\omega \in \Omega$, the sequence $\{\Psi(t_{n}, \theta_{-t_{n}} \omega,\phi_n\left(\theta_{-t_{n}} \omega\right))\}$ has a convergent subsequence provided $t_{n} \rightarrow \infty$, $B=\{B(\omega)\}_{\omega \in \Omega} \in \mathcal{D}$ and $\phi_n\left(\theta_{-t_{n}} \omega\right) \in B\left(\theta_{-t_{n}} \omega\right)$.
\end{lem}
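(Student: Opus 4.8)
The plan is to combine the uniform a priori bounds obtained in the proof of Lemma~\ref{lem3.3} with the eventual compactness of the linear semigroup $\tilde{S}(t)$ for $t>\tau$. First, since by \eqref{2.5d} one has $\Psi(t_n,\theta_{-t_n}\omega,\chi(\theta_{-t_n}\omega))=V(t_n,\theta_{-t_n}\omega,\varpi(\theta_{-t_n}\omega))+(z(\theta_{\cdot}\omega),z(\omega))$, and the last summand is a single element of $H$ independent of $n$, it suffices to show that $\{V(t_n,\theta_{-t_n}\omega,\varpi_n)\}_n$ has a convergent subsequence in $H$, where $\varpi_n:=\phi_n(\theta_{-t_n}\omega)-(z(\theta_{\cdot}\theta_{-t_n}\omega),z(\theta_{-t_n}\omega))$ ranges in the family $B'(\omega'):=B(\omega')-\{(z(\theta_{\cdot}\omega'),z(\omega'))\}$, which by \eqref{3.9}, \eqref{3.9a} and the temperedness of $B$ is again a tempered family in $\mathcal{D}$.

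Next, I would fix a number $T>\tau$ once and for all, and use the cocycle property (Definition~\ref{defn2}(iii)) to write $V(t_n,\theta_{-t_n}\omega,\varpi_n)=V(T,\theta_{-T}\omega,y_n)$ with $y_n:=V(t_n-T,\theta_{-(t_n-T)}(\theta_{-T}\omega),\varpi_n)$. Applying \eqref{4.8c} with $\omega$ replaced by $\theta_{-T}\omega$ and $t$ by $t_n-T$, and using $\varrho<0$, $\varrho+L_f<0$ together with the temperedness of $B'$ (which forces $e^{\varrho(t_n-T)}\|\varpi_n\|\to0$ and $e^{(\varrho+L_f)(t_n-T)}\|\varpi_n\|\to0$ as $n\to\infty$), one obtains $N\in\mathbb{N}$ and a finite number $R=R(\omega,T)$ such that $\|y_n\|\le R$ for all $n\ge N$. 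Inserting this into \eqref{4.1x} on the fixed interval $[0,T]$ with base point $\theta_{-T}\omega$ and using the Gr\"onwall inequality (Lemma~\ref{lem2}), \eqref{3.9}, \eqref{3.9a} and Hypothesis~A2, one then gets finite constants $M,M'$ independent of $n\ge N$ with $\sup_{s\in[0,T]}\|V(s,\theta_{-T}\omega,y_n)\|\le M$ and $\sup_{s\in[0,T]}\|\tilde{f}(s,\theta_{s-T}\omega,P_1V(s,\theta_{-T}\omega,y_n))\|_{\mathbb{X}}\le M'$.

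It then remains to show that $y\mapsto V(T,\theta_{-T}\omega,y)$ sends the ball $\{y\in H:\|y\|\le R\}$ into a precompact subset of $H$. By the variation of constants formula \eqref{2.5},
\[
V(T,\theta_{-T}\omega,y_n)=e^{-\tilde{L}T}\tilde{S}(T)y_n+\int_0^T e^{-\tilde{L}(T-s)}\tilde{S}(T-s)F(s,\theta_{s-T}\omega,V(s,\theta_{-T}\omega,y_n))\,ds=:a_n+b_n.
\]
Since $T>\tau$, the nilpotent left shift satisfies $T_0(T)=0$, so $\tilde{S}(T)$ acts only through $\mathbb{X}$ via the operators $S(T)$ and $S_T$; compactness of $S_T:\mathbb{X}\to\mathcal{L}$ follows from Hypothesis~A1, the norm-continuity of the compact semigroup $S(\cdot)$ on $[T-\tau,T]$ and Arzel\`a--Ascoli, i.e.\ $\tilde{S}(t)$ is eventually compact (cf.\ \cite{BP,SW91}). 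Hence $e^{-\tilde{L}T}\tilde{S}(T)$ is compact and $\{a_n\}_{n\ge N}$ is precompact because $\|y_n\|\le R$. For $b_n$, the crucial point is that $F$ has vanishing $\mathcal{L}$-component by \eqref{2.4b}, so the nilpotent shift never obstructs the regularization, and the integrand is uniformly bounded in $H$ on $[0,T]$ by the second bound above; splitting $\int_0^T=\int_0^{T-\delta}+\int_{T-\delta}^T$ for $\delta\in(0,T)$, the tail $\int_{T-\delta}^T$ has $H$-norm $\le C\delta$ uniformly in $n$, while $\int_0^{T-\delta}$ is a fixed compact operator (a semigroup factor $S(\delta)$ in the $\mathbb{X}$-component and a $\delta$-shift in the history component, followed once more by an Arzel\`a--Ascoli argument in $\xi\in[-\tau,0]$) applied to a uniformly bounded element of $H$, hence precompact; letting $\delta\to0$ shows $\{b_n\}_{n\ge N}$ is totally bounded. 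Thus $\{V(t_n,\theta_{-t_n}\omega,\varpi_n)\}$, and therefore the original sequence, has a convergent subsequence. (Alternatively, one may pass via \eqref{2.5a}, \eqref{2.5b} to the scalar equation \eqref{2.3} and verify directly, by Arzel\`a--Ascoli, the relative compactness in $C([T-\tau,T];\mathbb{X})$ of the corresponding solution restricted to $[T-\tau,T]$, which yields relative compactness of $\{(v_T,v(T))\}$ in $H$.)

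The main obstacle is that the history space $\mathcal{L}$ carries no intrinsic smoothing, so all compactness has to be transported through the $\mathbb{X}$-component by means of the compactness of $S(t)$, the coupling operator $S_t$, and the nilpotency of $T_0(t)$ for $t\ge\tau$; the two delicate steps are (i) the compactness in $\mathcal{L}$ of $S_T$ and of the convolution appearing in $b_n$, where the norm-continuity of compact semigroups and Arzel\`a--Ascoli are needed, and (ii) checking, after the cocycle push-back by $T$, that the (subexponential but possibly unbounded) growth of $\|\varpi_n\|$ is genuinely absorbed by the decaying factor $e^{\varrho(t_n-T)}$ — which is exactly where the hypotheses $\varrho<0$ and $\varrho+L_f<0$ are used.
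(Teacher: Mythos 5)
Your proposal is correct in outline and takes a genuinely different --- and substantially more complete --- route than the paper. The paper's proof also starts from the absorbing set of Lemma~\ref{lem3.3}, but it runs a diagonal construction along growing times $k$, placing $\eta_k=\Psi(t_{n_k}-k,\theta_{-t_{n_k}}\omega,\phi_{n_k})$ in $K(\theta_{-k}\omega)$ and then simply asserting in \eqref{4.27}, ``by the assumptions'', that $\{\Psi(k,\theta_{-k}\omega,\eta_k)\}$ is precompact; no compactness mechanism is exhibited there (the compactness of $S(t)$ from Hypothesis A1 is never invoked, and the balls $K(\theta_{-k}\omega)$ are not even uniformly bounded in $k$, so precompactness of that sequence does not follow from each map $\Psi(k,\theta_{-k}\omega,\cdot)$ being compact). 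You instead freeze a single $T>\tau$, use the cocycle property to land the pulled-back orbit in a fixed ball via \eqref{4.8c} and the temperedness of the initial family (this is where $\varrho<0$ and $\varrho+L_f<0$ enter, exactly as in Lemma~\ref{lem3.3}), and then prove that $y\mapsto V(T,\theta_{-T}\omega,y)$ maps that ball into a precompact set by combining the eventual compactness of $\tilde S(T)$ for $T>\tau$ (nilpotency of $T_0$, compactness of $S(T)$ and of $S_T$ via norm continuity plus Arzel\`a--Ascoli) with the usual $\delta$-splitting of the Duhamel term. This is precisely the argument that step \eqref{4.27} of the paper silently presupposes, so your version buys a self-contained proof and makes visible why the restriction $t>\tau$ appears in the statement, while the paper's version buys brevity at the cost of leaving its key step unjustified. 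Two refinements you should make explicit: the temperedness of the shifted family $B'$ follows from the temperedness of $r(\omega)$ itself rather than from the crude growth bound \eqref{3.9}; and since $S_{T-s}$ is not compact when $T-s<\tau$, the precompactness of the head integral $\int_0^{T-\delta}$ in the $\mathcal{L}$-component is most safely obtained by your parenthetical alternative, i.e.\ Arzel\`a--Ascoli applied to the segment $v|_{[T-\tau,T]}$ in $C([T-\tau,T];\mathbb{X})$.
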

\begin{proof}
Take an arbitrary random set $\{B(\omega)\}_{\omega \in \Omega} \in \mathcal{D}$, a sequence $t_{n} \rightarrow+\infty$ and $\phi_n \in B\left(\theta_{-t_{n}} \omega\right)$. We have to prove that $\left\{\Psi\left(t_{n}, \theta_{-t_{n}} \omega, \phi_n\right)\right\}$ is precompact. Since $\{K(\omega)\}$ is a random absorbing set for $\Psi$, there exists $T>0$ such that, for all $\omega \in \Omega$,
\begin{equation}\label{4.24}
\Psi\left(t, \theta_{-t} \omega\right) B\left(\theta_{-t} \omega\right) \subset K(\omega)
\end{equation}
for all $t \geq T$.
Because $t_{n} \rightarrow+\infty$, we can choose $n_{1} \geq 1$ such that $t_{n_{1}}-1 \geq T$. Applying \eqref{4.24} for $t=t_{n_{1}}-1$ and $\omega=\theta_{-1} \omega$, we find that
\begin{equation}\label{4.25}
\eta_{1} \triangleq \Psi\left(t_{n_{1}}-1, \theta_{-t_{n_{1}}} \omega, \phi_{n_{1}}\right)  \in K\left(\theta_{-1} \omega\right)
\end{equation}
Similarly, we can choose a subsequence $\left\{n_{k}\right\}$ of $\{n\}$ such that $n_{1}<n_{2}<\cdots<n_{k} \rightarrow$ $+\infty$ such that $t_{n_k}\geq k$ and
\begin{equation}\label{4.26}
\eta_{k} \triangleq \Psi\left(t_{n_{k}}-k, \theta_{-t_{n_{k}}} \omega, \phi_{n_{k}}\right)  \in K\left(\theta_{-k} \omega\right)
\end{equation}
Hence, by the assumptions we conclude that
the sequence
\begin{equation}\label{4.27}
\left\{\Psi\left(k, \theta_{-k} \omega, \eta_{k}\right)\right\}\ \mbox {is precompact.}
\end{equation}
On the other hand, by \eqref{4.26} we have
\begin{equation}\label{4.28}
\begin{aligned}
\Psi(k, \theta_{-k} \omega, \eta_{k}) &=\Psi(k, \theta_{-k} \omega,\Psi(t_{n_{k}}-k, \theta_{-t_{n_{k}}} \omega, \phi_{n_{k}}) =\Psi\left(t_{n_{k}}, \theta_{-t_{n_{k}}} \omega,\phi_{n_{k}}\right),
\end{aligned}
\end{equation}
for all $k \geq 1$. Combining \eqref{4.27} and \eqref{4.28}, we obtain that the sequence $\left\{\Psi\left(t_{n_{k}}, \theta_{-t_{n_{k}}} \omega,\phi_{n_{k}}\right)\right\}$ is precompact. Therefore,  $\left\{\Psi\left(t_{n}, \theta_{t_{n}} \omega,\phi_{n}\right) \right\}$ is precompact, which completes the proof.
\end{proof}

Lemma \ref{lem3.3} says that the continuous RDS $\Psi$ has a random absorbing set while Lemma \ref{lem3.4} tells us that $(\theta, \Psi)$ is pullback asymptotically compact in $H$. Thus, it follows from Lemma \ref{lem1} that the continuous RDS $(\theta, \Psi)$ possesses a random attractor. Namely, we obtain the following result.

\begin{thm}\label{thm3.1} Assume that $\mathbf{Hypotheses \  A1-A2}$ are satisfied and $\varrho\triangleq \max \{0,1 / 2+\sigma\}-\mu<\frac{-\mu}{2}$, $\varrho+L_f< 0$, then the continuous RDS $\Psi$  admits a unique $\mathcal{D}$-pullback attractor $\mathcal{A}_\Psi$ in $H$ belonging to the class $\mathcal{D}$.
\end{thm}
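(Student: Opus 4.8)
The statement is essentially a repackaging of Lemmas~\ref{lem3.3} and \ref{lem3.4} through the abstract criterion of Lemma~\ref{lem1}, so the plan is to verify the three hypotheses of Lemma~\ref{lem1} for $\Psi$ and then treat uniqueness separately.

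First I would confirm that $(\theta,\Psi)$ is a \emph{continuous} RDS on $H$. Measurability of $\Phi$, and hence of $\Psi$ (since the stationary Ornstein--Uhlenbeck process $z$ and the translation $\omega\mapsto z(\theta_{t+\cdot}\omega)$ are measurable), follows from Theorem~\ref{thm2.1}: the mild solution given by \eqref{2.5} is the fixed point of a contraction whose data depend jointly measurably on $(t,\omega,(\phi,\phi(0)))$. The cocycle identity $\Psi(t+s,\omega,\cdot)=\Psi(t,\theta_s\omega,\cdot)\circ\Psi(s,\omega,\cdot)$ is inherited from the uniqueness of solutions of \eqref{2.4} for each fixed $\omega$ together with the cocycle property $z(\theta_{t+s}\omega)=z(\theta_t\theta_s\omega)$ already used in \eqref{2.5d}. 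Continuity of $(\phi,\phi(0))\mapsto\Psi(t,\omega,(\phi,\phi(0)))$ --- in fact Lipschitz continuity on bounded sets --- is obtained by subtracting two copies of \eqref{2.5}, using the bound $\|e^{-\tilde{L}(t-s)}\tilde{S}(t-s)\|\le e^{\varrho(t-s)}$, the Lipschitz estimate on $\tilde{f}$ coming from Hypothesis~A2, and the Gr\"onwall inequality of Lemma~\ref{lem2}.

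Next I would record that the random set $K(\omega)$ constructed in \eqref{3.14} is a \emph{closed} set, being a closed ball of $H$; that it is \emph{measurable} in $\omega$, because its radius $2cr(\omega)+\frac{-cL_f}{\varrho+L_f}r(\omega)+c_1(\omega)$ is built from the random variable $r(\cdot)$ and from $c_1(\cdot)$, which is itself a measurable function of $\omega$ (through \eqref{3.12}); and that it belongs to $\mathcal{D}$, because $r(\omega)$ is tempered by \eqref{3.8} and $c_1(\omega)$ is tempered as well. Hence, by Lemma~\ref{lem3.3}, $K$ is a closed measurable $\mathcal{D}$-pullback absorbing set for $\Psi$. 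Combining this with the $\mathcal{D}$-pullback asymptotic compactness established in Lemma~\ref{lem3.4}, all the hypotheses of Lemma~\ref{lem1} are in force, and therefore $\Psi$ possesses a $\mathcal{D}$-pullback random attractor
\[
\mathcal{A}_\Psi(\omega)=\bigcap_{\tau\ge 0}\overline{\bigcup_{t\ge\tau}\Psi\left(t,\theta_{-t}\omega,K(\theta_{-t}\omega)\right)}.
\]

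For uniqueness I would invoke the standard argument: if $\mathcal{A}_\Psi$ and $\tilde{\mathcal{A}}$ are both $\mathcal{D}$-pullback attractors, then $\tilde{\mathcal{A}}\in\mathcal{D}$, so the invariance of $\tilde{\mathcal{A}}$ and the pullback attracting property of $\mathcal{A}_\Psi$ yield $\operatorname{dist}(\tilde{\mathcal{A}}(\omega),\mathcal{A}_\Psi(\omega))=\operatorname{dist}(\Psi(t,\theta_{-t}\omega)\tilde{\mathcal{A}}(\theta_{-t}\omega),\mathcal{A}_\Psi(\omega))\to 0$ as $t\to\infty$, whence $\tilde{\mathcal{A}}(\omega)\subseteq\mathcal{A}_\Psi(\omega)$ by compactness; exchanging the roles of the two attractors gives the reverse inclusion, and thus $\tilde{\mathcal{A}}(\omega)=\mathcal{A}_\Psi(\omega)$ for $P$-a.e.\ $\omega$. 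I expect the only genuinely delicate point to be the continuity and measurability bookkeeping for $\Psi$ in the first step (in particular the joint measurability of the mild solution and the temperedness and measurability of $c_1(\omega)$); once that is settled, the theorem follows immediately from Lemma~\ref{lem1}.
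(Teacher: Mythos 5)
Your proposal is correct and follows essentially the same route as the paper, which simply combines Lemma \ref{lem3.3} (absorbing set) and Lemma \ref{lem3.4} (pullback asymptotic compactness) and invokes the abstract criterion of Lemma \ref{lem1}. Your additional bookkeeping on measurability, continuity, and the standard uniqueness argument supplies details the paper leaves implicit, but does not change the argument.
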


Moreover, by Theorem 3.1, the relationship between the RSDs $\Phi$ and $\Psi$ defined by \eqref{2.5d} as well as Proposition 3.2 in  \cite{SW91},  we have the following result about the existence of random attractors for equations  \eqref{1} and \eqref{2.4}.
\begin{cor}\label{cor3.1} Assume that $\mathbf{Hypotheses \  A1-A2}$ are satisfied and $\varrho\triangleq \max \{0,1 / 2+\sigma\}-\mu<\frac{-\mu}{2}$, $\varrho+L_f< 0$. Then, the continuous RDS $P_1\Psi$ generated by \eqref{1} admits a unique pullback attractor $P_1\mathcal{A}_\Psi$ in $P_1H$. Moreover, $\mathcal{A}_\Phi\triangleq \{\zeta|\zeta=\chi-(z(\theta_{t+\cdot}\omega), z(\theta_{t}\omega)), \chi\in \mathcal{A}_\Psi\}$ is the random attractor of $\Phi$.
\end{cor}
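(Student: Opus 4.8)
The plan is to reduce everything to Theorem \ref{thm3.1}, which already furnishes the $\mathcal{D}$-pullback attractor $\mathcal{A}_\Psi$ of $\Psi$ in $H$, via two observations: $\Psi$ and $\Phi$ are conjugate through a \emph{tempered random isometry} of $H$, so all attractor properties transfer verbatim; and $P_1$ is a bounded projection which, on the dynamically relevant (compact, graph-type) sets, loses no information because of the constraint $h=\phi(0)$. Concretely, put $\rho(\omega)\triangleq(z(\theta_{\cdot}\omega),z(\omega))\in H$ and $T(\omega)x\triangleq x+\rho(\omega)$. Since $\rho(\theta_t\omega)=(z(\theta_{t+\cdot}\omega),z(\theta_t\omega))$, formula \eqref{2.5d} says exactly $\Psi(t,\omega,\cdot)=T(\theta_t\omega)\circ\Phi(t,\omega,\cdot)\circ T(\omega)^{-1}$ on $H$. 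By \eqref{3.9}--\eqref{3.9a} together with $g_j,Ag_j\in\mathbb{X}$, the map $\omega\mapsto\rho(\omega)$ is measurable and tempered, so $T(\omega)^{\pm1}$ are random homeomorphisms of $H$ that preserve the class $\mathcal{D}$, and each is an isometry.

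Next I would check that $\mathcal{A}_\Phi(\omega)\triangleq T(\omega)^{-1}\mathcal{A}_\Psi(\omega)=\mathcal{A}_\Psi(\omega)-\rho(\omega)$ --- i.e. the set $\mathcal{A}_\Phi$ of the statement --- is the $\mathcal{D}$-pullback attractor of $\Phi$. Compactness and measurability are clear since $T(\omega)^{-1}$ is a homeomorphism and $\rho$ is measurable. Invariance is the algebraic identity $\Phi(t,\omega)\mathcal{A}_\Phi(\omega)=T(\theta_t\omega)^{-1}\Psi(t,\omega)\mathcal{A}_\Psi(\omega)=T(\theta_t\omega)^{-1}\mathcal{A}_\Psi(\theta_t\omega)=\mathcal{A}_\Phi(\theta_t\omega)$. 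For pullback attraction, since $T(\omega)^{-1}$ is an isometry and $T(\theta_{-t}\omega)D(\theta_{-t}\omega)\in\mathcal{D}$ whenever $D\in\mathcal{D}$,
\[
\operatorname{dist}\bigl(\Phi(t,\theta_{-t}\omega)D(\theta_{-t}\omega),\mathcal{A}_\Phi(\omega)\bigr)=\operatorname{dist}\bigl(\Psi(t,\theta_{-t}\omega)\bigl(T(\theta_{-t}\omega)D(\theta_{-t}\omega)\bigr),\mathcal{A}_\Psi(\omega)\bigr)\longrightarrow 0
\]
as $t\to\infty$ by the attraction property of $\mathcal{A}_\Psi$; uniqueness of $\mathcal{A}_\Phi$ is inherited from that of $\mathcal{A}_\Psi$. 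This settles the ``Moreover'' part.

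For the projected system, recall from \eqref{2.5a}--\eqref{2.5c} and Proposition 3.2 in \cite{SW91} that $P_1\Psi(t,\omega,(\psi,\psi(0)))=u_t(\cdot,\omega,\psi)$, so $P_1\Psi$ is precisely the solution cocycle of \eqref{1} acting on $P_1H=\mathcal{L}$; its measurability and cocycle properties descend from $\Psi$ since $P_1$ is bounded linear and, along trajectories of $\Psi$, the second component is slaved to the first via $h=\phi(0)$, so the induced map on $\mathcal{L}$ is well defined. I would then set $\mathcal{A}_{P_1\Psi}(\omega)\triangleq P_1\mathcal{A}_\Psi(\omega)$, which is compact (continuous image of a compact set) and a random set (continuous image of a random set). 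By invariance one has the \emph{equality} $\mathcal{A}_\Psi(\omega)=\Psi(\tau,\theta_{-\tau}\omega)\mathcal{A}_\Psi(\theta_{-\tau}\omega)$, and after a full delay time the history segment is a continuous function with $v_\tau(0)=v(\tau)$, so $\mathcal{A}_\Psi(\omega)$ sits inside the graph $\{(\phi,\phi(0))\}$; hence $P_1$ is injective there and, being a continuous bijection from a compact set, a homeomorphism onto $\mathcal{A}_{P_1\Psi}(\omega)$. Applying $P_1$ to the invariance and attraction properties of $\mathcal{A}_\Psi$ and pulling the resulting estimates back through this homeomorphism gives that $P_1\mathcal{A}_\Psi$ is the unique pullback attractor of $P_1\Psi$ in $P_1H$.

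The step I expect to be the main obstacle is the last one --- certifying that $P_1$ produces a genuine RDS on $\mathcal{L}$ and that $P_1\mathcal{A}_\Psi$ actually attracts and is invariant there, rather than merely being the image of an attractor. The delicacy is that the evaluation $\phi\mapsto\phi(0)$ is not continuous on $\mathcal{L}=L^2([-\tau,0],\mathbb{X})$, so $P_1$ cannot be inverted globally; one must exploit that the absorbing set after one delay time and the attractor are compact sets of continuous histories, where $P_1$ becomes a homeomorphism. This is exactly where Theorem 3.1 and Proposition 3.2 of \cite{SW91}, together with the smoothing of $\tilde{S}(t)$ for $t\geq\tau$ (the shift component $T_0(t)$ vanishes there), have to be invoked.
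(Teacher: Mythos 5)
Your proposal is correct and follows essentially the same route the paper intends: the paper states Corollary \ref{cor3.1} without a written proof, deriving it from Theorem \ref{thm3.1}, the conjugation relation \eqref{2.5d} (your tempered random isometry $T(\omega)$), and Proposition 3.2 of \cite{SW91} for the projection $P_1$. Your version is in fact more careful than the paper's, and the delicacy you flag at the end --- that $\phi\mapsto\phi(0)$ is not continuous on $\mathcal{L}$, so $P_1\Psi$ and the invariance/attraction of $P_1\mathcal{A}_\Psi$ must be justified on the compact graph-type sets reached after one delay time --- is real and is precisely what the citation of \cite{SW91} is meant to cover.
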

\section{Topological dimensions of random attractors}
The aim of this section is to estimate the Hausdorff and fractal dimensions for the attractor of \eqref{1}. Denote by $d_H(\mathcal{A})$ and $d_F(\mathcal{A})$ the Hausdorff and fractal dimensions of a random set $\mathcal{A}$ respectively.  We only need to prove that there exist  constants $d_H$ and $d_F$ such that $d_H(\mathcal{A}_\Psi)\leq d_H$ and $d_F(\mathcal{A}_\Psi)\leq d_F$, since by Theorem 3.1 and Proposition 3.2 in  \cite{SW91}, the topological dimensions of attractor $P_{1}\mathcal{A}_\Psi$ for \eqref{1}  apparently satisfy $d_H(P_{1}\mathcal{A}_\Psi)\leq d_H$ and $d_F(P_{1}\mathcal{A}_\Psi)\leq d_F$, i.e., the random attractors of \eqref{1} have finite Hausdorff and fractal dimensions less than those of \eqref{2.4}. In the sequel, we investigate the Hausdorff and fractal dimensions for the random attractor $\mathcal{A}_\Psi$ of \eqref{2.4}.

We first recall the concepts of Hausdorff and fractal dimensions of the attractor  $\mathcal{A}_\Psi\subset H$. More details can be found in \cite{DA98} and \cite{LR}. The Hausdorff dimension of the compact set $\mathcal{A}_\Psi\subset H$ is
$$
d_{H}(\mathcal{A}_\Psi)=\inf \left\{d: \mu_{H}(\mathcal{A}_\Psi, d)= 0 \right\}
$$
where, for $d \geq 0$,
$$\mu_{H}(\mathcal{A}_\Psi, d)=\lim _{\varepsilon \rightarrow 0} \mu_{H}(\mathcal{A}_\Psi, d, \varepsilon)$$
 denote the $d$-dimensional Hausdorff measure of the set $\mathcal{A}_\Psi\subset H$, where
 $$\mu_{H}(\mathcal{A}_\Psi, d, \varepsilon)=\inf \sum_{i} r_{i}^{d}$$
 and the infimum is taken over all coverings of $\mathcal{A}_\Psi$ by balls of radius $r_{i} \leqslant \varepsilon$. It can be shown that there exists $d_{H}(\mathcal{A}_\Psi) \in[0,+\infty]$ such that $\mu_{H}(\mathcal{A}_\Psi, d)=0$ for $d>d_{H}(\mathcal{A}_\Psi)$ and $\mu_{H}(\mathcal{A}_\Psi, d)=\infty$ for $d<d_{H}(\mathcal{A}_\Psi)$. $d_{H}(\mathcal{A}_\Psi)$ is called the Hausdroff dimension of $\mathcal{A}_\Psi$.

The fractal dimension (or capacity) of $\mathcal{A}_\Psi$ is defined as
$$
d_{F}(\mathcal{A}_\Psi)=\inf \left\{d>0: \mu_{F}(\mathcal{A}_\Psi, d)=0\right\},
$$
where
$$\mu_{F}(\mathcal{A}_\Psi, d)=\limsup _{\varepsilon \rightarrow 0} \varepsilon^{d} n_{F}(\mathcal{A}_\Psi, \varepsilon)$$
and $n_{F}(\mathcal{A}_\Psi, \varepsilon)$ is the minimum number of balls of radius $\leqslant \varepsilon$ which is necessary to cover $\mathcal{A}_\Psi$.

Take a covering of   $\mathcal{A}_\Psi$ by balls of radii less than $\varepsilon$ :
$$
\mathcal{A}_\Psi \subset \bigcup_{i=1} B\left(u_{i}, r_{i}\right), r_{i} \leq \varepsilon, u_{i} \in H
$$
where $B\left(u_{i}, r_{i}\right)$ denotes the ball in $H$ of center $u_{i}$ and radius $r_{i}$. Let $\theta=\theta_1$ and define
\begin{equation}\label{4.50}
\Psi(\omega)\phi=\Psi(1,\omega,\phi)
\end{equation}
for any $\phi\in H$ and P-a.e. $\omega\in \Omega$.
Then, it follows from the invariance of $\mathcal{A}_\Psi$ that
$$
\mathcal{A}_\Psi(\theta \omega) \subset \bigcup_{i=1} \Psi(\omega) B\left(u_{i}, r_{i}\right).
$$
In order to approximate $\Psi(\omega)$ by a linear map, we impose  the following almost surely uniformly differentiable assumption of $\Psi(\omega)$ on the attractor $\mathcal{A}_\Psi$.

$\mathbf{Hypothesis \  A3}$  The mapping $\Psi(\omega)$ is $\mathbb{P}$ almost surely differentiable, that is, $\mathbb{P}$ almost surely, for every $u$ in $\mathcal{A}_\Psi$, there exists a continuous linear operator $D \Psi(\omega, u): H\rightarrow H$, such that if $u, u+h \in \mathcal{A}_\Psi$, then
$$
|\Psi(\omega)(u+h)-\Psi(\omega) u-D \Psi(\omega, u) \cdot h| \leq K(\omega)|h|^{1+\alpha},
$$
where $K(\omega)$ is a random variable such that
$$
K(\omega) \geq 1, \ \mbox{for all}\ \omega \in \Omega,
$$
$\mathbb{E}(\ln K)<\infty$ and $\alpha>0$ is a number satisfying
$$
\alpha>0.
$$

For the  bounded linear operator $D \Psi(\omega, u)$ on $H$ and $n \in \mathbb{N}$, we set
$$
\alpha_{n}(D \Psi(\omega, u))=\sup _{\substack{G \subset H \\ \mathrm{~dim}\leqslant n}}\inf _{\substack{\phi \in G\\ \|\phi\|=1}}|D \Psi(\omega, u) \phi|
$$
and
$$
\omega_{n}(D \Psi(\omega, u))=\alpha_{1}(D \Psi(\omega, u)) \ldots \alpha_{n}(D \Psi(\omega, u)),
$$
where $\alpha_{n}(D \Psi(\omega, u))$ are the square roots of the eigenvalues of $D \Psi(\omega, u)^* D \Psi(\omega, u)$ corresponding to orthogonal eigenvectors $e_{n}$, which are in decreasing order. We set
$$
\alpha_{\infty}(D \Psi(\omega, u))=\inf _{n} \alpha_{n}(D \Psi(\omega, u))
$$
and further make the following assumptions.

$\mathbf{Hypothesis \  A4}$ There exists an integrable random variable $\bar{\omega}_{d}$, such that $\mathbb{P}$ almost surely,
$$
\omega_{d}(D S(\omega, u)) \leq \bar{\omega}_{d}(\omega)
$$
for any $u\in \mathcal{A}_\Psi$ and
$$
\mathbb{E}\left(\ln \left(\bar{\omega}_{d}\right)\right)<0 .
$$

Under the above assumptions, we have the following results concerning the dimension estimation of random attractors $\mathcal{A}_\Psi$ for $\Psi$, of which the proof is given in \cite{DA98,LR}.
\begin{lem}\label{lem4.1}
Assume that $\mathbf{Hypothesis \  A3-A4}$ are satisfied. Then, $\mathbb{P}$-a.s.
$$
d_{\mathrm{H}}(\mathcal{A}_\Psi) \leqslant d
$$
and
$$
d_{\mathrm{F}}(\mathcal{A}_\Psi) \leqslant \gamma
$$
for any $\gamma$ such that
$$
\gamma>\frac{\mathbb{E}\left[\max _{1 \leqslant j \leqslant d}\left(d q_{j}-j q_{d}\right)\right]}{-\mathbb{E} q_{d}},
$$
where $q_{j}=\log \bar{\omega}_{j}$.
\end{lem}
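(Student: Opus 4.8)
The plan is to follow the scheme of Debussche \cite{DA98} for the Hausdorff dimension and of Langa and Robinson \cite{LR} for the fractal dimension: propagate a covering of the attractor by balls under iteration of the time-one map $\Psi(\omega)$ along the orbit $\omega,\theta\omega,\theta^{2}\omega,\dots$, and control how the number and the radii of the balls evolve. The starting point is the invariance relation $\mathcal{A}_\Psi(\theta\omega)\subset\bigcup_i\Psi(\omega)B(u_i,r_i)$ recorded above together with Hypothesis A3, which says that on $\mathcal{A}_\Psi$ the nonlinear map $\Psi(\omega)$ is approximated on a small ball by the bounded linear operator $D\Psi(\omega,u_i)$ up to an error of order $K(\omega)r_i^{1+\alpha}$.

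First I would prove the elementary geometric lemma underlying the whole argument: if $L$ is a bounded linear operator on $H$ with singular values $\alpha_1(L)\ge\alpha_2(L)\ge\cdots$, then for every integer $d$ the image $L(B(0,r))$ can be covered by $N_d$ balls of radius $c_d\,\alpha_d(L)\,r$, with $\log N_d\le\sum_{j=1}^{d}\log\!\bigl(\alpha_j(L)/\alpha_d(L)\bigr)+C_d$ (one projects onto the top $d$ singular directions, covers the resulting $d$-dimensional ellipsoid, and thickens by $\alpha_d(L)r$ to absorb the remaining directions, using $\alpha_\infty(L)\le\alpha_d(L)$). Combined with the $C^{1+\alpha}$ estimate of Hypothesis A3, $\Psi(\omega)B(u_i,r_i)$ is covered by $N_d$ balls of radius $\rho_i\le c\,\alpha_d(D\Psi(\omega,u_i))\,r_i+K(\omega)\,r_i^{1+\alpha}$. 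Iterating from a fixed initial cover, after $n$ steps $\mathcal{A}_\Psi(\theta^{n}\omega)$ is covered by balls whose total count and radii are governed by products of $\omega_d(D\Psi(\theta^{k}\omega,\cdot))$ and of the error factors $K(\theta^{k}\omega)$, $0\le k<n$. By Hypothesis A4 and Birkhoff's ergodic theorem, $\tfrac1n\sum_{k=0}^{n-1}\log\bar\omega_d(\theta^{k}\omega)\to\mathbb{E}(\log\bar\omega_d)<0$ $\mathbb{P}$-a.s., while $\mathbb{E}(\log K)<\infty$ forces $\tfrac1n\log K(\theta^{n}\omega)\to0$; hence the radii contract geometrically, the nonlinear corrections stay negligible, and the $d$-dimensional sum $\sum_i\rho_i^{\,d}$ remains bounded along $\{\theta^{n}\omega\}$. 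Transferring this back from the subsequence to all $\omega$ via the measurability and temperedness of $\omega\mapsto\mathcal{A}_\Psi(\omega)$ yields $\mathcal{H}^{d}(\mathcal{A}_\Psi)<\infty$, i.e. $d_H(\mathcal{A}_\Psi)\le d$.

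For the fractal dimension the counting must be uniform in the scale $\varepsilon$ rather than merely summable, so within a single application of $\Psi(\omega)$ one must also control the intermediate coverings obtained after contracting only $j$ of the $d$ directions, for each $j\le d$; the worst of these intermediate bounds is precisely what produces the quantity $\max_{1\le j\le d}(dq_j-jq_d)$ with $q_j=\log\bar\omega_j$. Assembling these intermediate estimates over $n$ iterations and applying Birkhoff's theorem to $q_d$ and to each $q_j$ shows that, for any $\gamma>\mathbb{E}[\max_{1\le j\le d}(dq_j-jq_d)]/(-\mathbb{E}q_d)$, the quantity $\varepsilon^{\gamma}n_F(\mathcal{A}_\Psi,\varepsilon)$ stays bounded as $\varepsilon\to0$, whence $d_F(\mathcal{A}_\Psi)\le\gamma$.

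The main obstacle, and the reason this is not literally the deterministic argument of \cite{CFT,TR}, is the absence of a uniform bound on the random attractor and on the random quantities $K(\omega)$ and $\bar\omega_d(\omega)$: one must verify that the integrability requirements $\mathbb{E}(\log K)<\infty$ and $\mathbb{E}(\log\bar\omega_d)<0$ of Hypotheses A3--A4 are exactly what makes the ergodic-theorem bookkeeping converge, check the measurability of all the random covers used in the iteration, and ensure that the accumulated nonlinear errors do not overwhelm the geometric contraction coming from $\omega_d$. Once these points are settled, the estimates of \cite{DA98} and \cite{LR} apply essentially verbatim and deliver the stated bounds.
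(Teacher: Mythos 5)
The paper gives no proof of this lemma; it simply cites Debussche \cite{DA98} and Langa--Robinson \cite{LR}, and your sketch is an accurate outline of exactly those arguments (covering propagation under the time-one map, the singular-value covering lemma for linear images of balls, control of the $C^{1+\alpha}$ error via $\mathbb{E}(\ln K)<\infty$, and Birkhoff's ergodic theorem applied to $\log\bar\omega_d$ and to the $q_j$ for the fractal case). So your proposal is correct and takes essentially the same approach the paper relies on.
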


In the following, we verify $\mathbf{Hypothesis \  A3-A4}$. We first give the following result, which is a key ingredient to prove the $\mathbb{P}$ almost surely uniformly differentiability results of $\Psi(\omega)$.
\begin{pro}\label{pro1}
If $f: \mathcal{L}\rightarrow H$ is twice continuously differentiable, then for each $\varpi \in \mathcal{A}_\Phi$ and $h \in H$, there exists a continuous function $U^{\varpi, h}(t,\omega):[0, \infty)\times\Omega \rightarrow H$ such that
\begin{equation}\label{4.1a}
U^{\varpi, h}(t,\omega)=e^{-\tilde{L}t}\tilde{S}(t)h+\int_{0}^{t} e^{-\tilde{L}(t-s)}\tilde{S}(t-s)\left\{0, D \tilde{f}\left(P_1\Phi(s,\omega,\varpi)\right) P_{1} U(s)\right\} d s, \quad t \geqslant 0.
\end{equation}
Moreover, if $h\in D(\tilde{A})$, then $U(t,\omega)$ is a strong solution of the following variational equation on $H$.
\begin{equation}\label{4.1}
\left\{\begin{array}{l}
\displaystyle \frac{dU(t,\omega)}{dt}=\displaystyle  \tilde{A}U(t,\omega)-\mu U(t,\omega)+\left\{0, D \tilde{f}\left(P_1\Phi(s,\omega,\varpi)\right) P_{1} U(t,\omega)\right\},\\
U(0,\omega)=h \in H,
\end{array}\right.
\end{equation}
where  operators $\tilde{A}$ and $\tilde{f}$ are defined by \eqref{Atilde} and \eqref{2.4a}, $\Phi(t,\omega,\varpi)$ is RDS defined by \eqref{2.5d} with initial condition $h$.
\end{pro}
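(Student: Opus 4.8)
The plan is to treat the variational equation \eqref{4.1} as a pathwise linear non-autonomous Cauchy problem and to reproduce, almost verbatim, the variation-of-constants argument behind Theorem \ref{thm2.1}, with the nonlinearity $F$ replaced by the strongly continuous, uniformly bounded family of linear operators $B_\omega(s):=\{0,\,D\tilde f(P_1\Phi(s,\omega,\varpi))P_1\,\cdot\,\}$. First I would record the needed bounds. Since $f$ is twice continuously differentiable and Lipschitz with constant $L_f$ (Hypothesis A2), the Fr\'echet derivative $Df(\phi)\in\mathcal{L}(\mathcal L,\mathbb X)$ exists for every $\phi\in\mathcal L$ with $\|Df(\phi)\|\le L_f$, so that $\phi\mapsto D\tilde f(\phi)$ (the derivative of $\tilde f(t,\theta_t\omega,\cdot)$ in its last variable, acting on directions in $\mathcal L$) is well defined, continuous, and uniformly bounded by $L_f$; moreover, on a bounded neighbourhood of the compact set $\mathcal A_\Phi$ it is Lipschitz, because $D^2 f$ is bounded there. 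Since $\Phi$ is a continuous RDS, for fixed $\omega$ the map $s\mapsto P_1\Phi(s,\omega,\varpi)\in\mathcal L$ is continuous, hence $B_\omega(\cdot)$ is strongly continuous with $\sup_s\|B_\omega(s)\|\le L_f$.

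Next, fix $T>0$ and $\omega$ outside the relevant null set and define $\mathcal T$ on $C([0,T];H)$ by letting $(\mathcal T U)(t)$ equal the right-hand side of \eqref{4.1a}. Using the exponential bounds for $\tilde S(t)$ from Hypothesis A1, the bound $\|e^{-\tilde L t}\|\le e^{\mu t}$ (a consequence of \eqref{2.4e}), and $\|B_\omega(s)\|\le L_f$, one gets
\[
\|(\mathcal T U_1)(t)-(\mathcal T U_2)(t)\|\le L_f\int_0^t\big\|e^{-\tilde L(t-s)}\tilde S(t-s)\big\|\,\|U_1(s)-U_2(s)\|\,ds\le C_T\,L_f\int_0^t\|U_1(s)-U_2(s)\|\,ds,
\]
so $\mathcal T$ is a contraction on $C([0,T_0];H)$ for $T_0$ small; Banach's fixed point theorem yields a unique continuous local solution of \eqref{4.1a}, and since the integral equation is linear, the Gr\"onwall inequality (Lemma \ref{lem2}) furnishes an a priori bound on $[0,T]$ that rules out blow-up, so the solution extends to $[0,\infty)$ and defines the asserted continuous function $U^{\varpi,h}(\cdot,\omega)$. (Equivalently, one may invoke the linear non-autonomous theory of Pazy \cite{PA}: $\tilde A-\tilde L+B_\omega(s)$ generates a strongly continuous evolution family on $H$ whose mild solution is precisely \eqref{4.1a}.)

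For the second assertion, assume $h\in D(\tilde A)$. By Theorem \ref{thm2.1} and Remark \ref{rem2.1}, the orbit $s\mapsto\Phi(s,\omega,\varpi)$ is a strong solution of \eqref{2.4}, hence locally Lipschitz in $s$; combined with the local Lipschitz continuity of $D\tilde f$ near $\mathcal A_\Phi$, this makes $s\mapsto D\tilde f(P_1\Phi(s,\omega,\varpi))$ locally Lipschitz, and then a bootstrap in \eqref{4.1a} (using that $t\mapsto\tilde S(t)h$ is locally Lipschitz for $h\in D(\tilde A)$, together with the local Lipschitz continuity of $U$ itself established from the integral equation) shows that $U=U^{\varpi,h}(\cdot,\omega)$ is locally Lipschitz on $[0,\infty)$. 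Consequently the forcing term $G(s):=\{0,\,D\tilde f(P_1\Phi(s,\omega,\varpi))P_1 U(s)\}$ lies in $W^{1,1}_{\mathrm{loc}}(0,\infty;H)$, and the standard regularity result for inhomogeneous linear Cauchy problems with $W^{1,1}$ forcing and initial data in the generator domain (Pazy \cite{PA}) upgrades the mild solution to a strong one: $U(t)\in D(\tilde A)$ for all $t\ge0$, $U$ is differentiable, and \eqref{4.1} holds pointwise.

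The main obstacle is precisely the regularity bootstrapping of the last paragraph: one must check that $s\mapsto D\tilde f(P_1\Phi(s,\omega,\varpi))$ inherits enough time-regularity from the fact that $\Phi(\cdot,\omega,\varpi)$ is a strong (hence locally Lipschitz) solution, and that this regularity, together with that of $U$, renders $G$ admissible for the strong-solution criterion. All intervening constants are pathwise finite (they depend on $\omega$ through $r(\omega)$ and the bound of $\mathcal A_\Phi$), which is harmless since existence is only claimed for $P$-a.e.\ $\omega$; the remaining estimates — the contraction constant and the Gr\"onwall bound — are routine under Hypotheses A1–A2 and \eqref{2.4e}.
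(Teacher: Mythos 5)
Your argument is correct and follows essentially the same route as the paper: the paper likewise observes that $D\tilde f$ is uniformly bounded along the orbit (using compactness of $P_1\mathcal{A}_\Phi$ and invariance, rather than your global bound $L_f$), concludes that $h\mapsto\{0,D\tilde f(P_1\Phi(t,\omega,\varpi))P_1h\}$ is Lipschitz, and then cites Pazy's Theorem 6.1.5 in one line for both the mild and the strong solution. You have merely unfolded that citation into an explicit contraction/Gr\"onwall argument and a $W^{1,1}$-forcing regularity step, which is, if anything, more careful than the paper's own treatment.
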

\begin{proof}
Let $$L_{1}(\omega)=\sup_{\varsigma \in P_{1}\mathcal{A}_\Phi}|D \tilde{f}(\varsigma)|,$$ where $$ |D\tilde{ f}(\varsigma)|=\sup_{\|\eta\|_{ \mathcal{L}} \leqslant 1}\|D \tilde{f}(\varsigma)\eta\|_{\mathbb{X}}. $$
Since $\tilde{f}$ is $C^{1}$ and $P_{1}\mathcal{A}_\Psi$ is compact, then $L_{1}(\omega)<\infty$. Given any $h\in D(\tilde{A})$, define $F_{\chi}:H \rightarrow H$ by
$$
F_{\varpi}(h)=\left\{0, D \tilde{f}\left(P_1\Phi(t,\omega,\varpi)\right) P_{1}h\right\}, \quad t \geqslant 0, \quad h \in H.
$$
It follows from the invariance of $\mathcal{A}_\Phi$ under $\Phi$ and $\varpi\in \mathcal{A}_\Phi$ that $\Phi(s,\omega,\varpi)\in \mathcal{A}_\Phi$ and hence $P_1\Phi(t,\omega,\varpi)\in P_{1}\mathcal{A}_\Phi$ and  $\left|D f\left(P_1\Phi(t,\omega,\varpi)\right)\right| \leqslant L_{1}(\omega)<\infty$, for all $t \geqslant 0$. This implies that $F_{\varpi}(\cdot)$ is Lipschitz continuous on $H$. Therefore the conclusion follows from  Pazy \cite[Theorem 6.1.5]{PA}.
\end{proof}

Now, we establish the following almost surely uniform differentiability results of $\Psi(\omega)$ on the random attractor  $\mathcal{A}_\Psi(\omega)$.
\begin{thm}\label{thm4.1}
The mapping $\Psi(\omega)$ is $\mathbb{P}$ almost surely differentiable, that is, $\mathbb{P}$ almost surely, for every $u$ in $\mathcal{A}(\omega)$, there exist a continuous linear operator $D \Psi(\omega, u): H\rightarrow H$, such that if $u, u+h \in \mathcal{A}(\omega)$, then
$$
\|\Psi(\omega)(u+h)-\Psi(\omega) u-D \Psi(\omega, u) \cdot h\| \leq K(\omega)\|h\|^{1+\alpha}
$$
where $K(\omega)$ is a random variable such that
$$
K(w) \geq 1, w \in \Omega
$$
and $\alpha>0$ is a number.
\end{thm}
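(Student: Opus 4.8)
The plan is to take, as the candidate derivative, the time-one solution operator of the variational equation furnished by Proposition \ref{pro1}, to check it is a bounded linear operator, and then to estimate the Taylor remainder by a Gr\"{o}nwall argument on the fixed interval $[0,1]$.

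First I would fix notation. For $u\in\mathcal A_\Psi(\omega)$ put $\varpi:=u-(z(\theta_{\cdot}\omega),z(\omega))$, which by Corollary \ref{cor3.1} lies in $\mathcal A_\Phi(\omega)$; if moreover $u+h\in\mathcal A_\Psi(\omega)$ then $\varpi+h\in\mathcal A_\Phi(\omega)$. Define $D\Psi(\omega,u)h:=U^{\varpi,h}(1,\omega)$ with $U^{\varpi,h}$ as in \eqref{4.1a}. Linearity of $h\mapsto U^{\varpi,h}(1,\omega)$ is immediate from the linearity of \eqref{4.1a} in $h$ together with uniqueness, and boundedness follows by applying Lemma \ref{lem2} to \eqref{4.1a} on $[0,1]$, using the decay estimate $\|e^{-\tilde L(t-s)}\tilde S(t-s)\|\le e^{\varrho(t-s)}$ (cf. the proof of Lemma \ref{lem3.3}) and $|D\tilde f|\le L_f$. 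Since $\Psi$ differs from $\Phi$ only by the additive term $(z(\theta_{t+\cdot}\omega),z(\theta_t\omega))$, which does not depend on the initial datum, the $z$-terms cancel in the increment, and it remains to bound
$$
w(1):=V(1,\omega,\varpi+h)-V(1,\omega,\varpi)-U^{\varpi,h}(1,\omega).
$$

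Next I would subtract the integral representation \eqref{2.5} written for the data $\varpi+h$ and $\varpi$ and the variational identity \eqref{4.1a}. The terms $e^{-\tilde L t}\tilde S(t)h$ cancel, and since the only state-dependent part of $\tilde f$ is $v\mapsto f(v+z(\theta_{s+\cdot}\omega))$, writing $v_s:=P_1V(s,\omega,\varpi)$, $v_s^h:=P_1V(s,\omega,\varpi+h)$ and $z_s:=z(\theta_{s+\cdot}\omega)$ one obtains
$$
w(t)=\int_0^t e^{-\tilde L(t-s)}\tilde S(t-s)\bigl\{0,\ f(v_s^h+z_s)-f(v_s+z_s)-Df(v_s+z_s)\,P_1U^{\varpi,h}(s)\bigr\}\,ds.
$$
By invariance of $\mathcal A_\Psi$ both $v_s+z_s$ and $v_s^h+z_s$ lie in $P_1\mathcal A_\Psi(\theta_s\omega)$, and by Lemma \ref{lem3.3} together with the temperedness bound \eqref{3.8} the set $\bigcup_{0\le s\le1}P_1\mathcal A_\Psi(\theta_s\omega)$ is contained in one bounded random ball; hence, $f$ being twice continuously differentiable as in Proposition \ref{pro1}, $M_2(\omega):=\sup\|D^2f\|$ over the closed convex hull of that set is a finite random variable. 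A first-order Taylor expansion gives $f(v_s^h+z_s)-f(v_s+z_s)=Df(v_s+z_s)(v_s^h-v_s)+R(s)$ with $\|R(s)\|_{\mathbb X}\le\tfrac12 M_2(\omega)\|v_s^h-v_s\|_{\mathcal L}^2$, and since $(v_s^h-v_s)-P_1U^{\varpi,h}(s)=P_1w(s)$,
$$
w(t)=\int_0^t e^{-\tilde L(t-s)}\tilde S(t-s)\bigl\{0,\ Df(v_s+z_s)\,P_1w(s)+R(s)\bigr\}\,ds.
$$

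Finally I would run two Gr\"{o}nwall estimates on $[0,1]$. From \eqref{2.5} and the global $L_f$-Lipschitz continuity of $\tilde f$, $\|V(s,\omega,\varpi+h)-V(s,\omega,\varpi)\|\le e^{\varrho s}\|h\|+L_f\int_0^s e^{\varrho(s-r)}\|V(r,\omega,\varpi+h)-V(r,\omega,\varpi)\|\,dr$, so Lemma \ref{lem2} yields $\|V(s,\omega,\varpi+h)-V(s,\omega,\varpi)\|\le c_0\|h\|$ for $s\in[0,1]$ with $c_0$ deterministic; thus $\|R(s)\|_{\mathbb X}\le\tfrac12 M_2(\omega)c_0^2\|h\|^2$. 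Inserting this into the identity for $w$ and bounding $\|e^{-\tilde L(t-s)}\tilde S(t-s)\|$ by a constant $C_\varrho$ on $[0,1]$,
$$
\|w(t)\|\le C_\varrho\int_0^t\Bigl(L_f\|w(s)\|+\tfrac12 M_2(\omega)c_0^2\|h\|^2\Bigr)\,ds,
$$
and a second application of Lemma \ref{lem2} gives $\|w(1)\|\le C(\omega)\|h\|^2$ for a random variable $C(\omega)$ depending only on $L_f,\varrho$ and $M_2(\omega)$. The theorem then holds with $\alpha=1$ and $K(\omega)=\max\{1,C(\omega)\}$, which is a random variable because $M_2(\omega)$ is one (a supremum of a continuous function over a random compact set). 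The main obstacle is not the Gr\"{o}nwall bookkeeping but securing the \emph{uniform} control of $Df$ and $D^2f$: one must know that the trajectories issuing from the attractor remain in one fixed bounded random region up to time $1$, which is exactly where the absorbing-set estimate of Lemma \ref{lem3.3} and the temperedness \eqref{3.8} enter; the constant $K(\omega)$ is then uniform over all $u\in\mathcal A_\Psi$ because these bounds depend on the region only, not on $u$.
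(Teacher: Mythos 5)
Your proposal is correct and follows essentially the same route as the paper's proof: define $D\Psi(\omega,u)h:=U^{\varpi,h}(1,\omega)$ via the variational equation of Proposition \ref{pro1}, establish the Lipschitz bound $\|\Psi(t,\omega,\chi+h)-\Psi(t,\omega,\chi)\|\le e^{(L_f+\varrho)t}\|h\|$ by a first Gr\"{o}nwall estimate, control the Taylor remainder through $\sup\|D^2f\|$ over the closed convex hull of the (compact, invariant) attractor, and run a second Gr\"{o}nwall estimate on $w$ to conclude with $\alpha=1$. The only cosmetic difference is that you justify the finiteness of the second-derivative supremum via the absorbing set and temperedness, whereas the paper appeals directly to invariance and compactness of $\mathcal{A}(\omega)$; both yield the same random constant $K(\omega)$.
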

\begin{proof}
We first claim that for any constant $t>0$ and $\chi, \chi+h\in \mathcal{A}(\omega)$, there exists a constant $L>0$ such that
$$\left\|\Psi(t,\omega,\chi)-\Psi(t,\omega,\chi+h)\right\|_{X} \leqslant L(t)\|h\|.$$
By Theorem \ref{thm2.1} and the relationship $\Psi(t, \omega,\chi(\omega))=\Phi(t,\omega, \varpi(\omega))+(z(\theta_{t+\cdot}\omega), z(\theta_{t}\omega))$ with $\varpi(\omega)=\chi(\omega)-(z(\theta_{\cdot}\omega), z(\omega))$, we have
\begin{equation}\label{4.2}
\Psi(t,\omega, \chi)=e^{-\tilde{L}t}\tilde{S}(t)\chi+\int_{0}^{t} e^{-\tilde{L}(t-s)}\tilde{S}(t-s)F(s, \theta_{s}\omega, \Phi(s,\omega, \varpi)) d s+(z(\theta_{t+\cdot}\omega), z(\theta_{t}\omega)),
\end{equation}
\begin{equation}\label{4.3}
\Psi(t,\omega, \chi+h)=e^{-\tilde{L}t}\tilde{S}(t)(\chi+h)+\int_{0}^{t} e^{-\tilde{L}(t-s)}\tilde{S}(t-s)F(s, \theta_{s}\omega, \Phi(s,\omega, \varpi+h))) d s+(z(\theta_{t+\cdot}\omega), z(\theta_{t}\omega)),
\end{equation}
from which it follows that
\begin{equation}\label{4.4}
\Psi(t,\omega, \chi+h)-\Psi(t,\omega, \chi)=e^{-\tilde{L}t}\tilde{S}(t)h+\int_{0}^{t} e^{-\tilde{L}(t-s)}\tilde{S}(t-s)[F(s, \theta_{s}\omega, \Phi(s,\omega, \varpi+h))-F(s, \theta_{s}\omega, \Phi(s,\omega,\varpi))]ds.
\end{equation}
Since $\|\tilde{S}(t)\|\leqslant e^{\max \{0,1 / 2+\sigma\} t}, t \geqslant 0$, we have
\begin{equation}\label{4.6}
\begin{aligned}
\|\Psi(t,\omega, \chi+h)-\Psi(t,\omega, \chi)\|&\leq e^{\varrho t}\|h\|+L_f\int_{0}^{t} e^{\varrho (t-s)}\|P_1[\Phi(s,\omega, \varpi+h)-\Phi(s,\omega,\varpi)]\|ds\\
&= e^{\varrho t}\|h\|+L_f\int_{0}^{t} e^{\varrho (t-s)}\|P_1[\Psi(s,\omega, \chi+h)-\Psi(s,\omega, \chi)]\|ds\\
&\leq e^{\varrho t}\|h\|+L_f\int_{0}^{t} e^{\varrho (t-s)}\|\Psi(s,\omega, \chi+h)-\Psi(s,\omega, \chi)\|ds.
\end{aligned}
\end{equation}
Multiplying both sides of \eqref{4.6} by $e^{-\varrho t}$ and taking into account the Gr\"{o}nwall inequality, we obtain
\begin{equation}\label{4.5}
e^{-\varrho t}\|\Psi(t,\omega, \chi+h)-\Psi(t,\omega, \chi)\|\leq e^{L_f t} \|h\|,
\end{equation}
and hence
\begin{equation}\label{4.500}
\|\Psi(t,\omega, \chi+h)-\Psi(t,\omega, \chi)\|\leq e^{(L_f+\varrho) t} \|h\|.
\end{equation}
Therefore, the claim holds by taking $L(t)=e^{(L_f+\varrho) t}$.

Next we prove that,  for any $t >0$, there exist $K(\omega)\geq1$ and $\alpha>0$ such that, if $\chi, \chi+h\in \mathcal{A}(\omega)$, then
\begin{equation}\label{4.7}
\|\Psi(t,\omega, \chi+h)-\Psi(t,\omega, \chi)-U^{\chi+h, \chi}(t,\omega)\|\leq K(\omega)\|h\|^{1+\alpha}.
\end{equation}
Let
\begin{equation}\label{4.8}
L_{2}(\omega):=\sup _{\xi \in \overline{co}\mathcal{A}(w)}\left|D^{2} f(P_1 \xi)\right|,
\end{equation}
where $\overline{co}\mathcal{A}(\omega) $ represents the closed convex hull of $ \mathcal{A}(\omega) $.
Since $f$ is $C^{2}$ and $ \mathcal{A}(\omega)$ is compact, $L_{2}<\infty$. By Proposition \ref{pro1}, we have
$$
U^{\chi, h}(t,\omega)=e^{-\tilde{L}t}\tilde{S}(t)h+\int_{0}^{t} e^{-\tilde{L}(t-s)}\tilde{S}(t-s)\left\{0, D \tilde{f}\left(P_1\Phi(s,\omega,\varpi)\right) P_{1} U(s)\right\} d s, \quad t \geqslant 0.
$$
For notation simplicity, we denote $y(t,\omega)\triangleq  \Psi(t,\omega, \chi+h)-\Psi(t,\omega, \chi)=\Phi(s,\omega,\varpi+h)-\Phi(s,\omega,\varpi)$ and $w(t,\omega)\triangleq  \Psi(t,\omega, \chi+h)-\Psi(t,\omega, \chi)-U^{\varpi, h}(t,\omega) $. Then, it follows  from \eqref{4.2} and \eqref{4.3}  that
\begin{equation}\label{4.10}
\begin{aligned}
\|w(t,\omega)\|&=\|\int_{0}^{t} e^{-\tilde{L}(t-s)}\tilde{S}(t-s) \{0, \tilde{f}\left(P_1\Phi(s,\omega, \varpi+h)\right)\\
&-\tilde{f}\left(P_1\Phi(s,\omega,\varpi)\right)-D \tilde{f}\left(P_1\Phi(s,\omega,\varpi)\right) P_{1} U(s)\}d s\| \\
&\leq \int_{0}^{t} e^{\varrho (t-s)}\|\tilde{f}\left(P_1\Phi(s,\omega,\varpi+h)\right)-\tilde{f}\left(P_1\Phi(s,\omega,\varpi)\right)-D \tilde{f}\left(P_1\Phi(s,\omega,\varpi)\right) P_{1} U(s)\|_\mathbb{X}d s\\
&\leq \int_{0}^{t} e^{\varrho (t-s)}\int_{0}^{1}  |D\tilde{ f}\left(P_1(\Phi(s,\omega, \varpi)+\vartheta y(s,\omega))\right)-D \tilde{f}\left(P_1\Phi(s,\omega, \varpi)\right) |d \vartheta\|P_1y(s,\omega)\|_\mathcal{L} d s \\
&+ \int_{0}^{t} e^{\varrho (t-s)}\|D\tilde{ f}\left(P_1\Phi(s,\omega,\varpi)\right)P_1w(s,\omega)\|_\mathbb{X}  d s \\
&\leq\int_{0}^{t}  e^{\varrho (t-s)}\int _ { 0 } ^ { 1 } \int _ { 0 } ^ { 1 } |D ^ { 2 } \tilde{f }\left(P_1(\Phi(s,\omega,\varpi)+\lambda \vartheta y(s,\omega))\right)|  \lambda \vartheta d \lambda d \vartheta\|P_1y(s,\omega)\|_\mathcal{L }^2  d s \\
&+ \int_{0}^{t} e^{\varrho (t-s)}\|D\tilde{ f}\left(P_1\Phi(s,\omega,\varpi)\right)P_1w(s,\omega)\|_\mathbb{X}    d s\\
&\leq\int_{0}^{t}  e^{\varrho (t-s)}\int _ { 0 } ^ { 1 } \int _ { 0 } ^ { 1 } |D ^ { 2 } \tilde{f }\left(P_1(\Phi(s,\omega,\varpi)+\lambda \vartheta y(s,\omega))\right)| d \lambda d \vartheta\|y(s,\omega)\|^2 d s \\
&+ \int_{0}^{t} e^{\varrho (t-s)}|D\tilde{ f}\left(P_1\Phi(s,\omega,\varpi)\right)|\|w(s,\omega)\|  d s.\\
\end{aligned}
\end{equation}
Since $\chi, \chi+h \in  \mathcal{A}_\Psi(\omega)$, it follows from the invariance Corollary \ref{cor3.1} that $\varpi, \varpi+h \in  \mathcal{A}_\Phi(\omega)$. Therefore, the invariance  of  $ \mathcal{A}_\Phi(\omega)$ under $\Phi$ implies that $\Phi(t,\omega, \varpi), \Phi(t,\omega, \varpi+h) \in  \mathcal{A}(\omega)$ for all $t \geqslant 0$. Therefore, $ P_1\Phi(t,\omega, \varpi)+\lambda \vartheta y(s,\omega)) \in \overline{c o}\left(\mathcal{A}_\Phi(\omega)\right)$, for all $\vartheta, \lambda \in[0,1]$, where $\overline{co}\mathcal{A}_\Phi(\omega) $ represents the closed convex hull of $ \mathcal{A}_\Phi(\omega) $. Thus, it follows from \eqref{4.8} and \eqref{4.5} and the fact $f$ is $C^{2}$ that
\begin{equation}\label{4.11}
\begin{aligned}
\|w(t,\omega)\|& \leqslant  L_{2}(\omega)\int_{0}^{t} e^{[2(L_f+\varrho)+\varrho](t-s)}\|h\|^{2} d s+L_{1}(\omega)\int_{0}^{t} e^{\varrho(t-s)} \left\|w(s,\omega)\right\| d s.
\end{aligned}
\end{equation}
Multiply both sides of \eqref{4.11} by $e^{-\varrho t}$ gives
\begin{equation}\label{4.11g}
\begin{aligned}
e^{-\varrho t}\|w(t,\omega)\|& \leqslant  \frac{-L_{2}(\omega) e^{-\varrho t}}{2(L_f+\varrho)+\varrho}\|h\|^{2}+L_{1}(\omega)\int_{0}^{t} e^{-\varrho s} \left\|w(s,\omega)\right\| d s.
\end{aligned}
\end{equation}
which implies, by the Gronwall inequality, that
\begin{equation}\label{4.12}
\begin{aligned}
e^{-\varrho t}\|w(t,\omega)\| \leqslant \frac{-L_{2}(\omega) e^{-\varrho t}}{2(L_f+\varrho)+\varrho}\|h\|^{2}+\frac{L_{1}(\omega)e^{L_{1}(\omega) t}}{-(L_f+\varrho)}(e^{-(\varrho+L_1)t}-1)\|h\|^{2} .
\end{aligned}
\end{equation}
Therefore, we have
\begin{equation}\label{4.12d}
\begin{aligned}
\|w(t,\omega)\| \leqslant \frac{-L_{2}(\omega)}{2(L_f+\varrho)+\varrho}(1+\frac{L_{1}(\omega)(1-e^{(L_{1}(\omega)+\varrho) t}}{-(L_f+\varrho)})\|h\|^{2}.
\end{aligned}
\end{equation}

Take  $D\Psi(\omega)h\triangleq U^{\varpi, h}(1,\omega)$, then it follows from \eqref{4.1a} that $D\Psi(\omega)$ is linear and continuous. Moreover, we have
\begin{equation}\label{4.12b}
\begin{aligned}
\|\Psi(\omega)(\chi+h)-\Psi(\omega) \chi-D \Psi(\omega, \chi) \cdot h\|&=\|\Psi(1,\omega,\chi+h)-\Psi(1,\omega \chi)-U^{\chi, h}(1,\omega)\|\\
&\leq \frac{-L_{2}(\omega)}{2(L_f+\varrho)+\varrho}(1+\frac{L_{1}(\omega)(1-e^{(L_{1}(\omega)+\varrho)}}{-(L_f+\varrho)})\|h\|^{2},
\end{aligned}
\end{equation}
what implies that the statement of Theorem \ref{thm4.1} holds by taking $\alpha=1$ and $K(\omega)=\frac{-L_{2}(\omega)}{2(L_f+\varrho)+\varrho}(1+\frac{L_{1}(\omega)(1-e^{(L_{1}(\omega)+\varrho)}}{-(L_f+\varrho)})$.
\end{proof}

We can now prove the main results of this paper.
\begin{thm}\label{thm4.2}
Assume that $\mathbf{Hypothesis \  A1-A4}$ are satisfied and  $f: L^{2}([-r, 0] ; H)\rightarrow H$ is twice continuously differentiable. Then, \eqref{2.4} admits a random attractor $\mathcal{A}_\Psi(\omega)$ that satisfies for $\mathbb{P}$-a.s.
$$
d_{\mathrm{H}}(\mathcal{A}_\Psi(\omega)) \leqslant d
$$
and
$$
d_{\mathrm{F}}(\mathcal{A}_\Psi(\omega)) \leqslant \gamma,
$$
for any $\gamma$ such that
$$
\gamma>\frac{\mathbb{E}\left[\max _{1 \leqslant j \leqslant d}\left(d q_{j}-j q_{d}\right)\right]}{-\mathbb{E} q_{d}},
$$
where $q_{j}=\log \bar{\omega}_{j}$.
\end{thm}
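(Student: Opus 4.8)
The strategy is to put the time-one map $\Psi(\omega):=\Psi(1,\omega,\cdot)$ into the framework of the abstract dimension estimate recalled in Lemma \ref{lem4.1}. First I would note that the random attractor $\mathcal{A}_\Psi(\omega)$ of the continuous cocycle $\Psi$ exists by Theorem \ref{thm3.1} (the structural inequalities $\varrho<-\mu/2$ and $\varrho+L_f<0$ used there being in force throughout); since $\mathcal{A}_\Psi$ is invariant under $\Psi(t,\omega,\cdot)$ for every $t\ge 0$ and pullback attracting, it is in particular the compact, invariant, pullback attracting attractor of the discrete cocycle generated by $\Psi(\omega)$ over $\theta=\theta_1$, so it is exactly the set to which Lemma \ref{lem4.1} refers. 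It then remains to verify that $\Psi(\omega)$ satisfies $\mathbf{Hypothesis\ A3}$ and $\mathbf{Hypothesis\ A4}$ in the precise form required there.

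$\mathbf{Hypothesis\ A3}$ is the content of Theorem \ref{thm4.1}: the derivative is $D\Psi(\omega,u)=U^{u,\cdot}(1,\omega)$, with $U$ the solution of the variational equation of Proposition \ref{pro1}, the exponent is $\alpha=1$, and the remainder constant is
$$
K(\omega)=\frac{-L_2(\omega)}{2(L_f+\varrho)+\varrho}\left(1+\frac{L_1(\omega)\bigl(1-e^{L_1(\omega)+\varrho}\bigr)}{-(L_f+\varrho)}\right),
$$
which we take $\ge 1$ by enlarging it if necessary. The one point not written out in Theorem \ref{thm4.1} is the integrability $\mathbb{E}(\ln K)<\infty$. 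For this I would use that, by Lemma \ref{lem3.3} and Corollary \ref{cor3.1}, the attractor $\mathcal{A}_\Phi(\omega)$ lies in a ball whose radius is an affine function of the tempered random variable $r(\omega)$; since $f\in C^2$ and $\mathcal{A}_\Phi(\omega)$ is compact, both $L_1(\omega)=\sup_{\varsigma\in P_1\mathcal{A}_\Phi(\omega)}|D\tilde f(\varsigma)|$ and $L_2(\omega)=\sup_{\xi\in\overline{co}\,\mathcal{A}_\Phi(\omega)}|D^2 f(P_1\xi)|$ are bounded by a fixed continuous function of $r(\omega)$; temperedness of $r$ (inequality \eqref{3.8}) then yields $\mathbb{E}(\ln^+ r)<\infty$, hence $\mathbb{E}(\ln^+ L_i)<\infty$ and finally $\mathbb{E}(\ln K)<\infty$, with $K(\omega)\ge 1$.

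With $\mathbf{Hypothesis\ A3}$ established and $\mathbf{Hypothesis\ A4}$ assumed, the quantities $\alpha_n(D\Psi(\omega,u))$ and $\omega_d(D\Psi(\omega,u))=\alpha_1(D\Psi(\omega,u))\cdots\alpha_d(D\Psi(\omega,u))$ are well defined (here one uses that $D\Psi(\omega,u)$ is compact, being manufactured from the compact semigroup $S(t)$ via the variation-of-constants formula \eqref{4.1a}), and $\mathbf{Hypothesis\ A4}$ supplies an integrable $\bar\omega_d$ dominating $\omega_d(D\Psi(\omega,u))$ uniformly in $u\in\mathcal{A}_\Psi(\omega)$ with $\mathbb{E}(\ln\bar\omega_d)<0$. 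Lemma \ref{lem4.1} then applies verbatim and gives, $\mathbb{P}$-a.s., $d_H(\mathcal{A}_\Psi(\omega))\le d$ and $d_F(\mathcal{A}_\Psi(\omega))\le\gamma$ for every $\gamma$ exceeding $\mathbb{E}[\max_{1\le j\le d}(dq_j-jq_d)]/(-\mathbb{E}q_d)$ with $q_j=\log\bar\omega_j$, which is the assertion.

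The main obstacle I anticipate is the verification $\mathbb{E}(\ln K)<\infty$: because the random attractor is only a tempered, not a uniformly bounded, family, the Lipschitz-type constants $L_1(\omega),L_2(\omega)$ of the nonlinearity genuinely fluctuate with $\omega$, and one must carefully transfer the sub-exponential (tempered) growth of the absorbing radius $r(\omega)$ into integrability of $\ln K$. A secondary, more routine point is the measurability in $\omega$ and uniformity in $u\in\mathcal{A}_\Psi(\omega)$ of the widths $\alpha_j(D\Psi(\omega,u))$, so that the expectations in Lemma \ref{lem4.1} are meaningful; this rests on the continuous dependence of the variational solution $U$ on the data together with compactness of $\mathcal{A}_\Psi(\omega)$.
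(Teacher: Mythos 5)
Your proposal is logically correct and reaches the conclusion by the same top-level route as the paper — verify Hypothesis A3 via Theorem \ref{thm4.1}, invoke Hypothesis A4 (which is assumed in the statement), and apply Lemma \ref{lem4.1} — but it omits what constitutes the bulk of the paper's written proof. After citing the existence of $\mathcal{A}_\Psi$, the paper does not simply quote A4: it sets up the family of variational solutions $U_1,\dots,U_m$ from Proposition \ref{pro1}, forms the exterior product $|U_1(t,\omega)\wedge\cdots\wedge U_m(t,\omega)|_{\wedge^m H}$, and uses the Constantin--Foias--Temam trace identity \eqref{4.15} to obtain $\bar\omega_m(t,\omega)\le\exp\{t\,q_m(t,\omega)\}$ with $q_m$ the uniform trace functional \eqref{4.19}, so that $\mu_1+\cdots+\mu_m\le q_m$. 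This is the machinery by which the abstract quantities $\bar\omega_d$ in A4 are actually connected to the generator of the linearized flow and made computable; your proof treats A4 as a black box, which is legitimate given the hypotheses of the theorem but buys you nothing quantitative. Conversely, you address a point the paper passes over in silence: Hypothesis A3 requires $\mathbb{E}(\ln K)<\infty$, and Theorem \ref{thm4.1} only produces the random variable $K(\omega)$ without establishing its log-integrability, so your attempt to supply it is a genuine improvement.

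One step in that attempt is imprecise, however: temperedness of $r(\omega)$ does \emph{not} in general imply $\mathbb{E}(\ln^+ r)<\infty$ — temperedness is a pathwise subexponential-growth condition along $\theta$-orbits, not a moment condition. Here the integrability must instead be read off from the explicit construction: $r(\omega)$ dominates $\sum_j|z_j(\omega_j)|^2$ with the $z_j$ stationary Ornstein--Uhlenbeck (Gaussian) variables, so all moments of $r$ can be arranged to be finite. Moreover, passing from $\mathbb{E}(\ln^+ r)<\infty$ to $\mathbb{E}(\ln^+ L_1)<\infty$ and $\mathbb{E}(\ln^+ L_2)<\infty$ requires some growth control on $D\tilde f$ and $D^2 f$ over balls of radius $r(\omega)$ (e.g.\ polynomial growth); mere continuity of the derivatives is not enough. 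These caveats affect the paper equally, since it never verifies $\mathbb{E}(\ln K)<\infty$ at all.
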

\begin{proof}
The existence of random attractor $\mathcal{A}_\Psi(\omega)$ has been proved in Theorem \ref{thm2.1} under  \break
$\mathbf{Hypotheses \  A1-A2}$. We show in the sequel the finite dimensionality of $\mathcal{A}_\Psi(\omega)$.
Let $\chi \in \mathcal{A}_\Psi(\omega)$, $h_i \in D(\bar{A})$ and
$U_i(t,\omega)\triangleq U^{\varpi, h_i}(t,\omega):[0, \infty)\times\Omega\times H \rightarrow H, i=1, \ldots, m$ be defined by
\begin{equation}\label{4.13}
U_i(t,\omega)=e^{-\tilde{L}t}\tilde{S}(t)h_i+\int_{0}^{t} e^{-\tilde{L}(t-s)}\tilde{S}(t-s)\left\{0, D \tilde{f}\left(P_1\Phi(s,\omega,\varpi)\right) P_{1} U(s)\right\} d s, \quad t \geqslant 0.
\end{equation}
It follows from Proposition \ref{pro1} that $U_i(t,\omega)$ satisfies the following variational equation on $H$.
\begin{equation}\label{4.14}
\left\{\begin{array}{l}
\displaystyle \frac{dU_i(t,\omega)}{dt}=\displaystyle  \tilde{A}U_i(t,\omega)-\mu U_i(t,\omega)+\left\{0, D \tilde{f}\left(P_1\Phi(s,\omega,\varpi)\right) P_{1}U_i(t,\omega)\right\},\\
U_i(0,\omega)=h_i \in H,
\end{array}\right.
\end{equation}
By a similar argument to that for (2.40) in \cite{TR} Chapter V, we obtain
\begin{equation}\label{4.15}
\frac{1}{2} \frac{d}{d t}\left|U_{1}(t,\omega) \wedge \cdots \wedge U_{m}(t,\omega)\right|_{\wedge^{m} H}^{2}=\left|U_{1}(t,\omega) \wedge \cdots \wedge U_{m}(t,\omega)\right|_{\wedge^{m} H}^{2} \operatorname{Tr}\left(G(t) \circ Q_{m}(t)\right),
\end{equation}
where $|\cdot|_{\wedge^{m} H}$ represents the exterior product and
\begin{equation}\label{4.16}
Q_{m}(t)=Q_{m}\left(\chi, h_1, \ldots, h_{m}\right)
\end{equation}
is the orthogonal projection of $H$ onto the space spanned by $U_{1}(t,\omega), \ldots, U_{m}(t,\omega)$ and $G(t)=G(t,\omega): H \rightarrow H$ is defined by
\begin{equation}\label{4.17}
G(t,\omega)h=\tilde{A}h-\mu h(t,\omega)+\left\{0, D \tilde{f}\left(P_1\Phi(s,\omega,\varpi)\right) P_{1}h\right\}.
\end{equation}
Therefore
\begin{equation}\label{4.18}
\begin{aligned}
&\left|U_{1}(t,\omega) \wedge \cdots \wedge U_{m}(t,\omega)\right|_{\wedge^{m} H} \\
&=\left|U_{1}(0,\omega) \wedge \cdots \wedge U_{m}(0,\omega)\right|_{\wedge^{m} H} \exp \left(\int_{0}^{t} \operatorname{Tr}\left(G(s,\omega) \circ Q_{m}(s)\right) ds\right) \\
&=\left|h_1 \wedge \cdots \wedge h_m\right|_{\wedge^{m} H} \exp \left(\int_{0}^{t} \operatorname{Tr}\left(G(s,\omega) \circ Q_{m}(s)\right) d s\right).
\end{aligned}\
\end{equation}

Let
\begin{equation}\label{4.19}
\begin{aligned}
q_{m}(t,\omega)=\sup _{\chi \in \mathcal{A}_\Psi(\omega), h_{i}\in D(\bar{A}),\|h_{i}\|_{H}\leq 1}\frac{1}{t} \int_{0}^{t}\operatorname{Tr}\left(G(s,\omega) \circ Q_{m}(s)\right) ds.
\end{aligned}\
\end{equation}
and
\begin{equation}\label{4.20}
\begin{aligned}
q_{m}(\omega)=\limsup _{t \rightarrow \infty} q_{m}(t,\omega).
\end{aligned}\
\end{equation}
Then we have
\begin{equation}\label{4.21}
\begin{aligned}
&\left|U_{1}(t,\omega) \wedge \cdots \wedge U_{m}(t,\omega)\right|_{\wedge^{m} H}\leq \left|h_1 \wedge \cdots \wedge h_m\right|_{\wedge^{m} H} \exp \left\{t q_{m}(t,\omega)\right\}.
\end{aligned}\
\end{equation}

Since by proposition \ref{pro1},  the mild solution of the initial value problem \eqref{4.1} depends continuously on initial data and $D(\bar{A})$ is dense in $H$ holds for all $\left\{h_{i}\right\} \in H, i=1, \ldots, m$. Therefore, we have the following estimation:
$$
\bar{\omega}_{m}(t,\omega) \leqslant \exp \left\{t q_{m}(t,\omega)\right\}
$$
and
$$
\pi_{m} \leqslant e^{q_{m}}.
$$
Thus
$$
\mu_{1}+\cdots+\mu_{m}=\ln \pi_{m} \leqslant q_{m},
$$
indicating the results hold by Lemma \ref{lem4.1}.
\end{proof}

\section{Conclusions}
In this paper, we have estimated the topological dimensions  of random attractor for the stochastic delayed semilinear partial differential equation \eqref{1}. In order to overcome the difficulty caused by the lack of Hilbert geometry, we recast the equation into a Hilbert space. One naturally wonders, whether we can estimate the dimension of attractors for SPFDEs in their natural phase space, i.e. Banach spaces. This requires to establish  the general framework to estimate the dimension of attractors of RDS in Banach spaces, which will be studied in the near future. Moreover, there are also SPFDEs on infinite domains which can  model the spatial-temporal patterns for the mature population of age-structured species  under random perturbations. The existence of random attractors for a stochastic nonlocal delayed reaction-diffusion equation on a semi-infinite interval have been studied in \cite{HZT}. However, little attention has been paid to the estimation of topological dimensions  of random attractor for the equation therein, which also deserves much effort in the future.

\medskip

\noindent{\bf Acknowledgement.}
This work was jointly supported by China Postdoctoral Science Foundation (2019TQ0089), Hunan Provincial Natural Science Foundation of China (2020JJ5344), the Scientific Research Fund of Hunan Provincial Education Department (20B353), China Scholarship Council(202008430247). \\
The research of T. Caraballo has been partially supported by Spanish Ministerio de Ciencia e
Innovaci\'{o}n (MCI), Agencia Estatal de Investigaci\'{o}n (AEI), Fondo Europeo de
Desarrollo Regional (FEDER) under the project PID2021-122991NB-C21 and the Junta de Andaluc\'{i}a
and FEDER under the project P18-FR-4509.\\
This work was completed when Wenjie Hu was visiting the Universidad de Sevilla as a visiting scholar, and he would like to thank the staff in the Facultad de Matem\'{a}ticas  for their help and thank the university for its excellent facilities and support during his stay.

\medskip
\noindent{\bf Data availability.}
No data has been used in the development of the research in this paper.

\medskip
\noindent
{\bf Conflict of interest.}
The authors declare that there are not any conflict of interest.

\small

\end{document}